\numberwithin{equation}{section}
\newtheorem{theorem}{Theorem}[section]
\newtheorem{lemma}[theorem]{Lemma}
\newtheorem{proposition}[theorem]{Proposition}
 \newtheorem{corollary}[theorem]{Corollary}
      \theoremstyle{definition}
     \newtheorem*{definition}{Definition}
     \newtheorem{example}[theorem]{Example}
     \theoremstyle{remark}
     \newtheorem{remark}[theorem]{Remark}
\newcommand{\Sym}{\mathop{\mathrm{Sym}}}
\newcommand{\Alt}{\mathop{\mathrm{Alt}}}
 \definecolor{mycolor}{rgb}{0.55,0.0,0.16}
  \definecolor{myred}{rgb}{0.6,0.0,0.16}
  \definecolor{mygreen}{rgb}{0.0,0.6,0.16}
  \definecolor{myviolet}{rgb}{1,0,1}
\begin{document}
\title[Products of subgroups, subnormality, and relative orders of elements]{Products of subgroups, subnormality,\\and relative orders of elements}
 \author[L. Sabatini]{Luca Sabatini}
\address{Luca Sabatini, Alfr\'ed R\'enyi Institute of Mathematics, Hungarian Academy of Sciences\newline
Re\'altanoda utca 13-15, H-1053, Budapest, Hungary} 

\email{sabatini@renyi.hu, sabatini.math@gmail.com}
\subjclass[2020]{Primary 20D40, 20D25, 20F99.}
 \keywords{Relative order; product of subgroups; subnormal subgroup.}        
	\maketitle
	

        \begin{abstract}
        Let $G$ be a group.
        We give an explicit description of the set of elements $x \in G$ such that $x^{|G:H|} \in H$ for every subgroup of finite index $H \leqslant G$.
       This is related to the following problem: given two subgroups $H$ and $K$, with $H$ of finite index,
        when does $|HK:H|$ divide $|G:H|$?
          \end{abstract}
          
          \vspace{0.2cm}
	\begin{section}{Introduction}
	
	Let $G$ be an arbitrary group, and let us write $H \leqslant_f G$ to say that $H$ is a subgroup of $G$ of finite index.
	Let $x \in G$ and $H \leqslant_f G$.
	If $H$ is a normal subgroup of $G$, then it is easy to see that $x^{|G:H|} \in H$.
	The same is not true in general:
	fixed $H \leqslant_f G$, the set $\{ x \in G: x^{|G:H|} \in H \}$ may not even be closed under multiplication
	(take $G=\Sym(3)$ and $H=\langle (1 \> 2) \rangle$).
	The goal of this paper is to understand this phenomenom and its implications.
	As far as we can see, this has not been dealt with before in the literature.
	
	\begin{definition}
	Let $x \in G$ and $H \leqslant G$.
	The {\itshape relative order} of $x$ with respect to $H$ is
	$$ o_H(x) \> := \> | \langle x \rangle : \langle x \rangle \cap H | . $$ 
	\end{definition}
	\vspace{0.1cm}
	
	The following result is proved in Section \ref{sectPrelim}.

	\begin{lemma} \label{lemRelOrd}
	 Let $n \geq 1$.
	Then $x^n \in H$ if and only if $o_H(x)$ is finite and divides $n$.
	\end{lemma}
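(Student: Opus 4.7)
The plan is to translate the question into one about subgroups of $\mathbb{Z}$ via the natural surjection $k \mapsto x^k$. Write $C := \langle x \rangle$ and $D := C \cap H$, so that $o_H(x) = |C:D|$ by definition; and note that $x^n \in H$ if and only if $x^n \in D$, since $x^n \in C$ automatically.

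The key step is to identify $|C:D|$ with the index of a subgroup of $\mathbb{Z}$. I would consider the homomorphism $\varphi : \mathbb{Z} \to C$ given by $\varphi(k) = x^k$, which is surjective with kernel $m\mathbb{Z}$, where $m = |x|$ (and $m = 0$ if $x$ has infinite order), and set
\[
I \>:=\> \varphi^{-1}(D) \>=\> \{k \in \mathbb{Z} : x^k \in D\}.
\]
Since $I$ is a subgroup of $\mathbb{Z}$, we have $I = d' \mathbb{Z}$ for a unique $d' \geq 0$. Now $\varphi$ descends to a bijection $\mathbb{Z}/I \to C/D$, so
\[
o_H(x) \>=\> |C:D| \>=\> [\mathbb{Z}:I],
\]
which equals $d'$ when $d' \geq 1$ and $\infty$ when $d' = 0$. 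In particular, $o_H(x)$ is finite if and only if $d' \geq 1$, and in that case $o_H(x) = d'$.

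Once this dictionary is in place, both directions are essentially tautological. If $x^n \in H$ then $n \in I = d' \mathbb{Z}$; since $n \geq 1$ this forces $d' \geq 1$, so $o_H(x) = d'$ is finite and divides $n$. Conversely, if $o_H(x) = d'$ is finite and $d' \mid n$, then $n \in d' \mathbb{Z} = I$, hence $x^n \in D \subseteq H$.

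I do not anticipate any real obstacle; the only point that requires a little care is the bookkeeping around the degenerate case $d' = 0$, which occurs precisely when $x$ has infinite order and no positive power of $x$ lies in $H$, in which case $o_H(x) = \infty$ and the statement is vacuously consistent.
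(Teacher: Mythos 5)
Your proof is correct and takes essentially the same route as the paper: both reduce the question to the cyclic group $\langle x\rangle$ and the arithmetic of $\mathbb{Z}$. Where the paper introduces $ord_H(x)=\min\{n\geq 1: x^n\in H\}$, identifies it with $o_H(x)$ (leaving that identification as a ``simple exercise'' after passing to $H\cap\langle x\rangle$), and concludes with the division algorithm, you pull everything back along $k\mapsto x^k$ and invoke the fact that every subgroup of $\mathbb{Z}$ is $d'\mathbb{Z}$ --- the same underlying argument, packaged so that the paper's omitted exercise is carried out explicitly, including the degenerate case $d'=0$.
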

	 \vspace{0.1cm}
	
	Given $H,K \leqslant G$,
	 $|HK:H|$ is the cardinality of the set of all cosets of $H$ which are intersected by $K$
	 (we refer to Section \ref{sectPrelim} for more details).
	Since $o_H(x)=|H \langle x \rangle :H|$, we obtain
	 
	\begin{corollary} \label{corCrit}
	$x^{|G:H|} \in H$ if and only if $|H \langle x \rangle:H|$ divides $|G:H|$.
	\end{corollary}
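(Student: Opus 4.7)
The plan is to derive the statement as a direct instance of Lemma \ref{lemRelOrd} applied with $n = |G:H|$. Lemma \ref{lemRelOrd} asserts that $x^n \in H$ if and only if $o_H(x)$ is finite and divides $n$. Substituting $n = |G:H|$ gives the equivalence ``$x^{|G:H|} \in H$ iff $o_H(x)$ is a finite divisor of $|G:H|$''. Because $|G:H|$ is itself finite (as $H \leqslant_f G$), any divisor of it is automatically finite; so the ``finite and'' clause is redundant and the condition collapses to ``$o_H(x)$ divides $|G:H|$'' (with the convention that $\infty$ does not divide a finite integer).

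The remaining task is to rewrite $o_H(x)$ in the form $|H\langle x\rangle : H|$ appearing in the corollary. This is the identity $|K : H \cap K| = |HK : H|$ specialized to $K = \langle x\rangle$, which is exactly the assertion recalled in the paragraph preceding the statement and treated in Section \ref{sectPrelim}. The underlying bijection matches each coset $(\langle x\rangle \cap H)\, x^k$ of $\langle x\rangle \cap H$ in $\langle x\rangle$ with the coset $H x^k$ of $H$ that meets $\langle x\rangle$; this description makes sense and is a bijection even when $H\langle x\rangle$ is not a subgroup, which is the subtle point of the formula.

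Once this identification is in place, the equivalence claimed by the corollary is a tautological rewriting. I do not anticipate any real obstacle: the content of the corollary is entirely packaged inside Lemma \ref{lemRelOrd} together with the index identity $o_H(x) = |H\langle x\rangle : H|$, and the proof reduces to quoting these two facts. The only place where any care is needed is the finiteness observation above, which ensures the statement in the corollary can be phrased without reference to $o_H(x)$ being finite.
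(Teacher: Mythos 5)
Your proof is correct and follows the paper's own route: the corollary is obtained by applying Lemma \ref{lemRelOrd} with $n=|G:H|$ together with the identity $o_H(x)=|\langle x\rangle:\langle x\rangle\cap H|=|H\langle x\rangle:H|$ recorded in Section \ref{sectPrelim}. Your extra remarks on why the finiteness clause is automatic and why the coset count works even when $H\langle x\rangle$ is not a subgroup are exactly the (implicit) points the paper relies on.
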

	\vspace{0.1cm}
	
	If $H,K \leqslant_f G$, then $|HK:H|$ divides $|G:H|$ if and only if $|HK:K|$ divides $|G:K|$.
	If $G$ is finite, both are equivalent to $|HK|$ dividing $|G|$.
	In Section \ref{sectPS}, we prove the following two results:
	
	\begin{proposition} \label{propSubn}
	Let $H \lhd \lhd \> G$. Then $|HK:K|$ divides $|G:K|$ for every $K \leqslant_f G$.
	\end{proposition}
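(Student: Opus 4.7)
The plan is to induct on the length of a subnormal series $H = H_0 \lhd H_1 \lhd \cdots \lhd H_n = G$ witnessing $H \lhd\lhd G$, reducing in one step to the normal case. I will freely use the coset-counting identity $|HK:K| = |H:H \cap K|$ (which holds for arbitrary subgroups, since cosets of $K$ meeting $H$ correspond bijectively to cosets of $H \cap K$ in $H$), and the fact that for a chain of finite-index subgroups the indices multiply.

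\textbf{Base case ($n=1$).} Here $H \lhd G$, so $HK$ is an honest subgroup of $G$ containing $K$. Since $K \leqslant_f G$, the intermediate index $|HK:K|$ is finite and divides $|G:K| = |G:HK|\cdot|HK:K|$.

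\textbf{Inductive step.} Assume the result for subnormal chains of length $< n$, and suppose $H \lhd H_1 \lhd \cdots \lhd H_n = G$. Fix $K \leqslant_f G$ and set $L := H_1 \cap K$. Since the inductive hypothesis applies to the length-$(n-1)$ chain $H_1 \lhd \cdots \lhd G$, we get that $|H_1 K : K|$ divides $|G:K|$; in particular $L = H_1 \cap K$ has finite index in $H_1$, namely $|H_1 : L| = |H_1 K : K|$. Now $H \lhd H_1$, so by the base case (applied inside $H_1$ with the finite-index subgroup $L$) the index $|HL:L|$ divides $|H_1 : L|$. Finally, using $H \leqslant H_1$ one has $H \cap L = H \cap K$, so
\[
|HK:K| \;=\; |H : H \cap K| \;=\; |H : H \cap L| \;=\; |HL:L|,
\]
which divides $|H_1:L| = |H_1 K:K|$, which in turn divides $|G:K|$. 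Composing, $|HK:K|$ divides $|G:K|$.

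The argument is essentially a telescoping: at each link of the subnormal chain, normality upgrades the \emph{set} $HK$ to a \emph{subgroup}, and then divisibility of indices is automatic. The only point needing a moment of care is that the relevant indices remain finite as we climb the chain, which follows because $|H_i \cap K : H_{i-1}\cap K| \leqslant |H_i K : H_{i-1} K|$ is bounded at each step by $|G:K|$; I do not expect any real obstacle here.
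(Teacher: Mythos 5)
Your proof is correct and takes essentially the same route as the paper: the paper proves a one-step lemma ($H \lhd M$ and $K \leqslant_f G$ imply $|HK:K|$ divides $|MK:K|$, via the subgroup $H(M\cap K)$ and multiplicativity of indices) and chains it along the subnormal series, which is exactly what your inductive step does with $L = H_1 \cap K$ and the subgroup $HL \leqslant H_1$. (The closing heuristic about $|H_i\cap K : H_{i-1}\cap K| \leqslant |H_iK:H_{i-1}K|$ is unnecessary and imprecise, but the finiteness you actually need is already correctly obtained from $|H_1:L| = |H_1K:K| \mid |G:K|$.)
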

	
	\begin{theorem} \label{thSubn2}
	Let $H \leqslant_f G$. Then $H \lhd \lhd \> G$ if and only if $|HK:H|$ divides $|G:H|$ for every $K \leqslant G$.
	\end{theorem}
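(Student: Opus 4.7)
For the forward direction, one extends Proposition~\ref{propSubn} from $K \leqslant_f G$ to arbitrary $K \leqslant G$. This makes sense because $|HK:H| = |K : H\cap K|$ is bounded by $|G:H|$ and hence finite. Proceed by induction on the subnormal defect $d$ of $H$ in $G$. The case $d = 0$ is trivial. For $d \geqslant 1$, choose a chain $H \lhd\lhd H_{d-1} \lhd G$ and factor
\begin{equation*}
|HK:H| \, = \, |K : H_{d-1} \cap K| \cdot |H_{d-1} \cap K : H \cap K|.
\end{equation*}
The first factor divides $|G : H_{d-1}|$, since $H_{d-1} \lhd G$ implies that $H_{d-1}K$ is a subgroup with $|H_{d-1}K : H_{d-1}| = |K : H_{d-1} \cap K|$. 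The second factor divides $|H_{d-1} : H|$ by the inductive hypothesis applied to $H \lhd\lhd H_{d-1}$ and the subgroup $H_{d-1} \cap K$.

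For the converse, first reduce to $G$ finite. Applying the hypothesis to $K = \langle x \rangle$ and invoking Corollary~\ref{corCrit} yields $x^{|G:H|} \in H$ for every $x \in G$; by conjugation, $x^{|G:H|} \in H^g$ for every $g \in G$, hence $x^{|G:H|} \in H_G := \bigcap_{g \in G} H^g$. So $G/H_G$ has exponent dividing $|G:H|$. Moreover the action of $G$ on $G/H$ is faithful modulo $H_G$, yielding an embedding $G/H_G \hookrightarrow \Sym(G/H)$, so $G/H_G$ is finite. Since $H \lhd\lhd G$ iff $H/H_G \lhd\lhd G/H_G$ and the hypothesis passes to the quotient, we may assume $G$ is finite.

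Now induct on $|G|$; the case $H = G$ is trivial. The plan is to produce a proper normal subgroup $M \lhd G$ with $H \leqslant M$ such that the hypothesis descends to $H \leqslant M$; then by induction $H \lhd\lhd M$, and $M \lhd G$ yields $H \lhd\lhd G$. Applying the hypothesis to a Sylow $p$-subgroup $P$ of $G$, the $p$-power $|P : H\cap P|$ divides $|G:H|$, forcing $|H\cap P|$ to equal the $p$-part of $|H|$; hence $H \cap P \in \mathrm{Syl}_p(H)$ for every $P \in \mathrm{Syl}_p(G)$ and every prime $p$. The natural candidate for $M$ is the normal closure $H^G$: since Sylow $p$-subgroups of $M$ have the form $P \cap M$ for $P \in \mathrm{Syl}_p(G)$, the Sylow-intersection property is inherited by $M$, and combining this with the full divisibility hypothesis (in particular, for non-Sylow $K$) shows both that $H^G \lneq G$ whenever $H \lneq G$ and that the hypothesis descends to $H \leqslant M$.

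The main obstacle is this last step: verifying $H^G \lneq G$ and the descent of the hypothesis to $H \leqslant M$. Both require combining the Sylow-intersection property with the full divisibility for non-Sylow $K$, not merely for Sylow subgroups; indeed, an example such as $H = \Delta\mathop{\mathrm{Alt}}(5) \leqslant \mathop{\mathrm{Alt}}(5) \times \mathop{\mathrm{Alt}}(5)$ shows that the obstruction to subnormality can only be detected via a carefully chosen non-Sylow $K$ (such as a product of distinct Sylow $3$-subgroups in the two factors).
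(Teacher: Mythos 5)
Your forward direction is correct and is essentially the paper's argument: both proceed by induction on the subnormal defect and both rest on the same computation, namely that $H \lhd M$ makes $H(M\cap K)$ a subgroup of $M$, whence $|M\cap K : H\cap K|$ divides $|M:H|$. (The paper runs the induction from the bottom of the chain, $H \lhd M \lhd\lhd G$, via its Lemma~\ref{lemTool2}; you run it from the top, $H \lhd\lhd H_{d-1} \lhd G$; the difference is purely organizational.) Your reduction of the converse to the finite case via the normal core is also fine.

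The converse itself, however, has a genuine gap, and you have correctly located it yourself: you never establish that $M = H^G$ is proper, nor that the hypothesis descends from $H \leqslant G$ to $H \leqslant M$. The descent is genuinely problematic: for $K \leqslant M$ the hypothesis gives that $|HK:H|$ divides $|G:H|$, and there is no reason this should force $|HK:H|$ to divide $|M:H|$. More importantly, what you are trying to prove by elementary induction is at least as strong as the Kegel--Wielandt conjecture: as you observe, the hypothesis applied to Sylow subgroups yields that $H\cap P$ is a Sylow $p$-subgroup of $H$ for every Sylow $P$ of $G$ and every $p$, and the implication from this to subnormality is Kleidman's theorem, whose only known proof uses the classification of finite simple groups. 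This is exactly how the paper closes the converse: it quotes Theorem~\ref{thKW} and applies it in $G/N$ with $N$ the normal core of $H$. No elementary route of the kind you sketch is known, so the missing step is not a routine verification but the entire content of the hard direction. (As a side remark, your $\Delta\Alt(5) \leqslant \Alt(5)\times\Alt(5)$ example does not illustrate what you claim: the diagonal already fails $2$-subnormality, since its intersection with $P_1\times P_2$ for distinct Sylow $2$-subgroups $P_1 \neq P_2$ of $\Alt(5)$ is too small, so Sylow subgroups alone detect the obstruction there.)
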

	\vspace{0.1cm}
	
	The converse of Proposition \ref{propSubn} is not true in general (see Example \ref{exBad}).
	In particular, some attention is needed with subgroups of infinite index.
	During the preparation of this manuscript,
	the author found out that the finite version of Theorem \ref{thSubn2} already appeared in \cite[Theorem 2]{2022Levy}.
	In Section \ref{sectExpSub}, we study the following class of subgroups
	
	\begin{definition}
	A subgroup $H \leqslant_f G$ is {\itshape exponential} if $x^{|G:H|} \in H$ for every $x \in G$.
	\end{definition}
	\vspace{0.1cm}
	
	This is a generalization of subnormality, and we prove that it is equivalent to normality in some cases,
	namely for the Hall subgroups of a finite group and for the maximal subgroups of a solvable group.
	From the dual point of view, in Section \ref{sectSG} we study the set
	$$ S(G) \> := \> \{ x \in G : x^{|G:H|} \in H \mbox{ for every } H \leqslant_f G \} . $$
	At first glance $S(G)$ is quite elusive, and indeed working directly with the definition is not easy.
	Using the results of Section \ref{sectPS}, we give an elementary proof of the next theorem.
	Given $N \lhd G$, let $F_N(G)$ be the preimage of $F(G/N)$,
	where $F(G)$ denotes the Fitting subgroup of $G$.
	
	\begin{theorem} \label{thSExp}
	If $G$ is any group, then $S(G) = \cap_{N \lhd_f G} F_N(G)$.
	\end{theorem}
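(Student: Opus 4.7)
The plan is to reduce to the case of finite groups and there identify $S(G)$ with the Fitting subgroup $F(G)$ via the Baer--Wielandt characterization. For any $N \lhd_f G$, every finite-index subgroup of $G/N$ has the form $H/N$ with $N \leqslant H \leqslant_f G$, and the condition $x^{|G:H|} \in H$ descends to $(xN)^{|G/N:H/N|} \in H/N$; hence $x \in S(G)$ implies $xN \in S(G/N)$. Conversely, given any $H \leqslant_f G$, its normal core $N = \bigcap_{g \in G} g H g^{-1}$ is a normal subgroup of finite index (since $G/N$ embeds in the symmetric group on the finite coset space $G/H$) contained in $H$, so if $xN \in S(G/N)$ then $x^{|G:H|} \in H$. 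It therefore suffices to prove $S(G) = F(G)$ when $G$ is finite.

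For a finite group $G$, I would invoke the Baer--Wielandt description $F(G) = \{x \in G : \langle x \rangle \lhd\lhd G\}$, valid because cyclic subgroups are nilpotent and $F(G)$ is the join of all subnormal nilpotent subgroups. If $\langle x \rangle \lhd\lhd G$, Proposition \ref{propSubn} gives that $|\langle x \rangle H : H|$ divides $|G:H|$ for every $H \leqslant G$, and Corollary \ref{corCrit} then yields $x^{|G:H|} \in H$; hence $F(G) \subseteq S(G)$. Conversely, if $x \in S(G)$, then $|H \langle x \rangle : H|$ divides $|G:H|$ for every $H \leqslant G$; in a finite group this is symmetric in the two subgroups (both are equivalent to $|H \langle x \rangle|$ dividing $|G|$), so $|\langle x \rangle K : \langle x \rangle|$ divides $|G : \langle x \rangle|$ for every $K \leqslant G$, and Theorem \ref{thSubn2} forces $\langle x \rangle \lhd\lhd G$, i.e.\ $x \in F(G)$.

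Applying $S(G/N) = F(G/N)$ to each $N \lhd_f G$, the inclusion $S(G) \subseteq \bigcap_{N \lhd_f G} F_N(G)$ follows from the forward direction of the reduction, and the reverse inclusion follows from the converse direction by choosing $N$ to be the normal core of an arbitrary $H \leqslant_f G$. The substantive step is the finite case, and within it the main subtlety is the swap $|H\langle x \rangle : H| \mid |G:H| \iff |\langle x \rangle K : \langle x \rangle| \mid |G : \langle x \rangle|$ available in finite groups, which is what lets one match the condition defining $S(G)$ with the hypothesis of Theorem \ref{thSubn2}; apart from this, everything rests on standard properties of the Fitting subgroup and the normal-core construction.
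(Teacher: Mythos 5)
Your argument is correct, and its overall architecture (reduce to finite quotients via normal cores, then identify $S$ with $F$ in the finite case) is the same as the paper's, which packages the reduction as Lemma \ref{lemReductionRF} and the finite case as Proposition \ref{propFitting}. The one genuine difference is the key lemma you use for the inclusion $S(G) \subseteq F(G)$ when $G$ is finite: after the symmetry swap $|H\langle x\rangle : H| \mid |G:H| \iff |\langle x\rangle K : \langle x\rangle| \mid |G:\langle x\rangle|$ (which is fine, and is recorded in the introduction of the paper), you invoke the ``if'' direction of Theorem \ref{thSubn2}, whose proof rests on the Kegel--Wielandt--Kleidman theorem and hence on the classification of finite simple groups. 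The paper deliberately avoids this: since $\langle x\rangle$ is cyclic, hence nilpotent, it suffices to apply Lemma \ref{lemKleidNilp} (Kegel--Wielandt for nilpotent subgroups), whose proof is a few lines and uses nothing beyond Sylow theory and the Fitting subgroup; this is exactly what lets the paper claim an \emph{elementary} proof of Theorem \ref{thSExp}. So your proof is valid as a deduction from the results stated in the paper, but it carries the full weight of CFSG where an elementary substitute is available; if you replace the appeal to Theorem \ref{thSubn2} by Lemma \ref{lemKleidNilp} (noting that only Sylow subgroups $K=P$ are needed, by Lemma \ref{lemPSubNChar}), you recover the paper's argument verbatim. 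The remaining ingredients you use --- the identification $F(G)=\{x : \langle x\rangle \lhd\lhd G\}$ for finite $G$ and Proposition \ref{propSubn} together with Corollary \ref{corCrit} for $F(G)\subseteq S(G)$ --- coincide with the paper's.
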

	\vspace{0.1cm}
	
	In particular, $S(G)=F(G)$ when $G$ is finite (Proposition \ref{propFitting}).
	Of course, Theorem \ref{thSExp} implies that $S(G)$ is closed under multiplication,
	a fact which is not immediately clear from the definition.
	
	\end{section}

	\vspace{0.2cm}
	\begin{section}{Preliminaries} \label{sectPrelim}
	
	We start with the proof of the key Lemma \ref{lemRelOrd}.
	
	\begin{proof}[Proof of Lemma \ref{lemRelOrd}]
	Let $ord_H(x) := \min \{ n \geq 1: x^n \in H \}$.
	We first notice that $o_H(x)=ord_H(x)$.
	Indeed, from the definitions we have $o_H(x)=o_{H \cap \langle x \rangle}(x)$ and $ord_H(x)=ord_{H \cap \langle x \rangle}(x)$.
	The fact that $o_{H \cap \langle x \rangle}(x) = ord_{H \cap \langle x \rangle}(x)$
	is a simple exercise.
	Now, the ``if'' part of the statement is trivial.
	On the other hand, if $x^n \in H$ for some $n \geq 1$, then clearly $ord_H(x) < \infty$. 
	Let $n = q \cdot ord_H(x) + r$ with $r,q \geq 0$ and $r<ord_H(x)$. 
	Since $H$ is a subgroup, the fact that $x^n = x^{q \cdot ord_H(x)} x^r \in H$ implies that $x^r \in H$,
	which in turn means $r=0$.
	\end{proof}
	\vspace{0.1cm}
	
	The bulk of this paper is about finite groups.
	We summarize here the basic tools and notation that are used with regard to general non-finite groups.
	Let $G$ be an arbitrary group and $H,K \leqslant G$.
	If $H$ and $K$ have finite index, then so has $H \cap K$, and $|G:H \cap K|=|G:H||H:H \cap K|$.
	As we have said in the introduction, we write $|HK:H|$ for the cardinality of the set of all cosets of $H$ which are intersected by $K$.
	This is not accidental, because the product set $HK = \{ hk : h \in H,k \in K \}$ is a union of cosets of $H$.
	It is not relevant to distinguish between left-cosets and right-cosets,
	since $k \in Hx$ if and only if $k^{-1} \in x^{-1}H$.
	 We also observe that $|HK:H|=|K:H \cap K|=|KH:H|$.\\
	 The {\itshape finite residual} $R(G)$ is the intersection of the subgroups of $G$ of finite index.
	 If $R(G)=1$, then $G$ is said to be {\itshape residually finite}. It is easy to check that $G/R(G)$ is always residually finite.
	Finally, the {\itshape Fitting subgroup} $F(G)$ is defined as the subgroup generated by the nilpotent normal subgroups,
	and coincides with the set of the elements $x \in G$ such that the normal closure $\langle x \rangle^G$ is nilpotent \cite{2008Cas}.
	In general, this is a stronger condition than $\langle x \rangle$ being subnormal in $G$.
	If $G$ is finite, then $F(G)$ itself is nilpotent, i.e. it is the largest nilpotent normal subgroup.
	 
	\end{section}

	\vspace{0.2cm}
	\begin{section}{Products of subgroups} \label{sectPS}
	
	The proof of Proposition \ref{propSubn} follows immediately from the following
	
	\begin{lemma} \label{lemTool}
	Let $H \lhd M \leqslant G$, and let $K \leqslant_f G$. Then $|HK:K|$ divides $|MK:K|$.
	\end{lemma}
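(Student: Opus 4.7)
The plan is to translate the statement about products back into intersections and then use the hypothesis $H \lhd M$ to interpolate a subgroup between $M\cap K$ and $M$.

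First, I would recall the identity $|HK:K|=|H:H\cap K|$ (and similarly $|MK:K|=|M:M\cap K|$) noted in Section~\ref{sectPrelim}. Since $K\leqslant_f G$ and $H,M\leq G$, both of these indices are finite (they divide $|G:K|$). So the claim becomes the purely internal divisibility assertion $|H:H\cap K|$ divides $|M:M\cap K|$.

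Next I would exploit normality: because $H\lhd M$, the product $L:=H(M\cap K)$ is a subgroup of $M$, and it visibly sits between $M\cap K$ and $M$. Then the chain $M\cap K\leq L\leq M$ gives
\[
|M:M\cap K| \;=\; |M:L|\cdot|L:M\cap K|,
\]
so it suffices to show $|L:M\cap K|=|H:H\cap K|$. This last equality is the second-isomorphism-theorem style index identity: $L/(M\cap K)=H(M\cap K)/(M\cap K)\cong H/(H\cap(M\cap K))=H/(H\cap K)$, where the final equality uses $H\leq M$ so that $H\cap M\cap K=H\cap K$. One only needs the index-version of this (valid without assuming any finiteness of $H$ or $M$ themselves), which follows directly by counting cosets of $M\cap K$ in $L$ against cosets of $H\cap K$ in $H$.

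Combining the two displays yields $|H:H\cap K|$ divides $|M:M\cap K|$, i.e.\ $|HK:K|$ divides $|MK:K|$, as required. The only point that needs care is the very first step, the justification that $H(M\cap K)$ is a subgroup; this is immediate from $H\lhd M$ and the fact that $M\cap K\leq M$. Everything else is bookkeeping with indices, and no hypothesis on $H$ or $M$ having finite index in $G$ is needed.
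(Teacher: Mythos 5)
Your proof is correct and is essentially the paper's own argument: both use $H \lhd M$ to form the subgroup $H(M\cap K)$ and decompose $|M:M\cap K| = |M:H(M\cap K)|\,|H(M\cap K):M\cap K| = |M:H(M\cap K)|\,|H:H\cap K|$, so that the ratio in question is the integer $|M:H(M\cap K)|$. One small slip worth fixing: the parenthetical claim that $|H:H\cap K|$ and $|M:M\cap K|$ \emph{divide} $|G:K|$ is false in general (such divisibility is precisely what the paper investigates); all you need, and all that holds, is that $|HK:K|\leq |G:K|$ is finite because $K \leqslant_f G$.
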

	\begin{proof}
	We have to prove that the ratio
	$$ \frac{|MK:K|}{|HK:K|} = \frac{|M:M \cap K|}{|H:H \cap K|} $$ 
	is an integer.
	Now $H \lhd M$ implies that $H(M \cap K)$ is a subgroup of $M$, and so we can write
	  \begin{align*}
|M:M \cap K|  & \> = \> 
|M:H(M \cap K)| |H(M \cap K):M \cap K| \\ & \> = \> 
 |M:H(M \cap K)| |H:H \cap K|  .
\end{align*}
	In particular, the original ratio equals $|M:H(M \cap K)|$.
	\end{proof}
	\vspace{0.1cm}
	
	We continue with the easiest direction of Theorem \ref{thSubn2}.
	
	\begin{lemma} \label{lemTool2}
	Let $H \leqslant_f M \leqslant_f G$, and let $K \leqslant G$. Then $\frac{|HK:H|}{|MK:M|}=|M \cap K:H \cap K|$.
	\end{lemma}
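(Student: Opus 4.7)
The plan is to reduce both numerator and denominator to indices of subgroups of $K$ using the identity $|HK:H|=|K:H\cap K|$ recorded in Section \ref{sectPrelim}, and then apply the multiplicativity of indices in a tower.

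First I would rewrite
\[
\frac{|HK:H|}{|MK:M|} \;=\; \frac{|K:H\cap K|}{|K:M\cap K|},
\]
which is the content of the preliminaries applied twice. Next I would check that the right-hand side makes sense, i.e.\ that both indices are finite: since $H\leqslant_f G$ we have $|K:H\cap K|=|HK:H|\leqslant|G:H|<\infty$, and the inclusion $H\cap K\leqslant M\cap K\leqslant K$ forces $|K:M\cap K|$ and $|M\cap K:H\cap K|$ to be finite as well.

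Then the tower $H\cap K\leqslant M\cap K\leqslant K$ together with the standard multiplicativity $|K:H\cap K|=|K:M\cap K|\cdot|M\cap K:H\cap K|$ finishes the proof: dividing by $|K:M\cap K|$ gives exactly $|M\cap K:H\cap K|$.

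There is no real obstacle here; the only thing to be careful about is that $K$ need not have finite index in $G$, so one should justify the finiteness of $|M\cap K:H\cap K|$ before invoking the chain rule, which is handled by the observation above.
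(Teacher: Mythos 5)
Your proof is correct and follows essentially the same route as the paper: rewrite $|HK:H|=|K:H\cap K|$ and $|MK:M|=|K:M\cap K|$, then apply index multiplicativity along the tower $H\cap K\leqslant M\cap K\leqslant K$ and cancel. The extra finiteness check you include (using $|K:H\cap K|\leqslant|G:H|<\infty$) is a reasonable refinement the paper leaves implicit, but it does not change the argument.
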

	\begin{proof}
	We have
	\begin{align*}
	\frac{|HK:H|}{|MK:M|}  & \> = \> 
	\frac{|K:H \cap K|}{|K:M \cap K|}  \\ & \> = \> 
	\frac{|K:M \cap K| |M \cap K:H \cap K|}{|K:M \cap K|}  \\ & \> = \> 
	 |M \cap K:H \cap K| . \qedhere
	\end{align*}
	\end{proof}
	\vspace{0.1cm}
	
	We prove the claim of Theorem \ref{thSubn2} by induction on the subnormal defect of $H$, so
	let $H \lhd_f M \lhd \lhd_f \> G$, and $K \leqslant G$. Using Lemma \ref{lemTool2}, we have
	$$ \frac{|G:H|}{|HK:H|}  = \frac{|G:M||M:H|}{|MK:M| |M \cap K:H \cap K|} . $$
	By induction, it is sufficient to prove that $\frac{|M:H|}{|M \cap K:H \cap K|}$ is an integer.
	Now $H \lhd M$ implies that $H(M \cap K)$ is a subgroup of $M$, and so we can write
	\begin{align*}
	|M:H|  & \> = \> 
	|M:H(M \cap K)| |H(M \cap K):H|  \\ & \> = \> 
	 |M:H(M \cap K)||M \cap K:H \cap K| .
	\end{align*}
	This concludes the proof of the ``only if'' part.

	\begin{subsection}{The Kegel-Wielandt-Kleidman theorem, revisited}
	
	\begin{definition}
	Let $G$ be a finite group, $H \leqslant G$, and let $p$ be a prime.
	Then $H$ is {\itshape $p$-subnormal} in $G$ if $H \cap P$ is a $p$-Sylow of $H$ for every $p$-Sylow $P$ of $G$. 
	\end{definition}
	\vspace{0.1cm}
	
	We characterize $p$-subnormality with the following
	
	\begin{lemma} \label{lemPSubNChar}
	A subgroup $H$ is $p$-subnormal if and only if $|HP|$ divides $|G|$ for every $p$-Sylow $P \leqslant G$.
	\end{lemma}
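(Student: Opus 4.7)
The plan is to unwind both conditions through the standard product formula $|HP| = |H||P|/|H \cap P|$ and then compare $p$-parts. Write $|G| = p^a m$ with $\gcd(p,m) = 1$, so that every $p$-Sylow $P$ of $G$ has order $p^a$. Fix such a $P$ and write $|H| = p^b c$ with $\gcd(p,c) = 1$ and $|H \cap P| = p^d$ for some $0 \leq d \leq b$. The assertion that $H \cap P$ is a $p$-Sylow of $H$ is then just the assertion that $d = b$.

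A direct computation gives
\[
|HP| \> = \> \frac{p^b \, c \cdot p^a}{p^d} \> = \> p^{\, a + b - d} \, c.
\]
Since $c$ divides $m$ by Lagrange applied to $H \leqslant G$, the divisibility $|HP| \mid |G|$ is equivalent to $p^{a+b-d} \mid p^a$, that is, to $b \leq d$. Combined with the trivial bound $d \leq b$, this forces $d = b$, i.e., $H \cap P$ is a $p$-Sylow of $H$. Running this equivalence over every $p$-Sylow $P$ of $G$ yields the lemma in both directions at once.

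I do not expect any serious obstacle: the whole argument is an order-counting exercise resting on Lagrange and the product formula. The only step that demands a moment's thought is the observation that the $p'$-part $c$ of $|H|$ automatically divides the $p'$-part $m$ of $|G|$, which is what allows the divisibility $|HP| \mid |G|$ to collapse to the single inequality $a+b-d \leq a$ on $p$-exponents.
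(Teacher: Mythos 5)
Your argument is correct and is essentially the paper's own proof: both rest on the product formula $|HP|=|H||P|/|H\cap P|$ (the paper phrases it as $|H:H\cap P|=|HP:P|$) together with a comparison of $p$-parts, using that the $p'$-part of $|H|$ divides that of $|G|$. No gap; the only difference is that you write out explicit prime-power exponents where the paper argues with indices.
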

	\begin{proof}
	We have that $H \cap P$ is a $p$-Sylow of $H$ if and only if $|H:H \cap P|=|HP:P|$ is not divisible by $p$.
	Since $|H:H \cap P|$ is a divisor of $|G|$, the last condition is equivalent to $|HP:P|$ dividing $|G:P|$, i.e. $|HP| \mid |G|$.
	\end{proof}
	\vspace{0.1cm}
	
	The famous Kegel-Wielandt conjecture \cite{1962Kegel,1980W},
	proved by Kleidman \cite{1991Kleidman} using the classification of the finite simple groups,
	says that $H \lhd \lhd \> G$ whenever $H$ is $p$-subnormal for every $p$.
	
	\begin{theorem}[Kegel-Wielandt conjecture] \label{thKW}
	If $|HP|$ divides $|G|$ for every Sylow subgroup $P \leqslant G$, then $H \lhd \lhd \> G$.
	\end{theorem}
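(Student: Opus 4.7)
The plan is to reformulate the hypothesis and then invoke the classical theorem. First, by Lemma \ref{lemPSubNChar}, the condition that $|HP|$ divides $|G|$ for every Sylow subgroup $P$ of $G$ is precisely the statement that $H$ is $p$-subnormal in $G$ for every prime $p$. This reduces Theorem \ref{thKW} to the Kegel-Wielandt conjecture in its original formulation, so the only real work is to prove that conjecture.

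Next I would argue by induction on $|G|$, taking $(G,H)$ to be a minimal counterexample. The goal of this stage is the standard normal-subgroup reduction: for a minimal normal subgroup $N$ of $G$, one shows that $H/N$ (if $N \leqslant H$) or $HN/N$ together with $H \cap N$ (if $N \not\leqslant H$) inherit $p$-subnormality in the appropriate quotient or subgroup, using that every Sylow of $G/N$ lifts to a Sylow of $G$ and that $H \cap P$ is Sylow in $H$ by hypothesis. Induction then yields the required subnormal chain in $G$, and iterating these reductions forces $G$ to be almost simple with socle a single nonabelian simple group $S$, with $H$ proper in $G$ and $H \cap S \neq 1$.

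At this point the classification of finite simple groups must enter, and this is the main obstacle. For each family of finite simple groups (alternating, classical, exceptional Lie type, sporadic), one must exhibit a prime $p$ and a Sylow $p$-subgroup $P$ of $G$ for which $H \cap P$ fails to be Sylow in $H$, contradicting $p$-subnormality. This case analysis, relying on detailed knowledge of maximal subgroups and Sylow normalizers in the simple groups, is the heart of Kleidman's proof in \cite{1991Kleidman}. Reproducing it is beyond the scope of a short proposal; the genuinely new input of this section is the clean reformulation provided by Lemma \ref{lemPSubNChar}, while the deep simple-group analysis is imported wholesale from \cite{1991Kleidman}.
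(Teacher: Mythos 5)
Your proposal matches the paper's treatment: the paper itself does not reprove the Kegel--Wielandt conjecture, but obtains Theorem \ref{thKW} exactly as you do, by the reformulation of $p$-subnormality in Lemma \ref{lemPSubNChar} and an appeal to Kleidman's classification-dependent proof \cite{1991Kleidman}. Your sketch of the reduction to (almost) simple groups is extra outline of the cited work, not a gap, so the approach is essentially identical.
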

	\vspace{0.1cm}
	
	See \cite{1993GKL} for some consequences of $p$-subnormality for a single $p$.
	The ``if'' part of Theorem \ref{thSubn2} follows easily.
	Let $H \leqslant_f G$, and assume that $|HK:H|$ divides $|G:H|$ for every $K \leqslant G$.
	Let $N \lhd_f G$ be the normal core of $H$, and let $N \leqslant K \leqslant G$ be any intermediate subgroup.
	Working with $G/N$ and $K/N$, Theorem \ref{thKW} gives $H/N \lhd \lhd \> G/N$, i.e. $H \lhd \lhd \> G$.\\
	
	We point out that Kegel \cite{1962Kegel} did not use the classification to prove Theorem \ref{thKW} when $H$ is solvable.
	We give a very short proof in the case where $H$ is nilpotent,
	which is enough for the characterization of $S(G)$ we will present in Section \ref{sectSG}.
	
	\begin{lemma}[Kegel-Wielandt for nilpotent subgroups] \label{lemKleidNilp}
	Let $H \leqslant G$ be a nilpotent subgroup of the finite group $G$.
	If $|HP|$ divides $|G|$ for every Sylow subgroup $P \leqslant G$, then $H \lhd \lhd \> G$.
	\end{lemma}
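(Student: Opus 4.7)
The plan is to prove the stronger statement $H \leq F(G)$, from which subnormality is immediate: $F(G)$ is nilpotent and normal in $G$, and every subgroup of a nilpotent group is subnormal, so $H \lhd \lhd F(G) \lhd G$. Since $H$ is nilpotent, it factors as $H = \prod_p H_p$ where $H_p$ is its unique Sylow $p$-subgroup, so it suffices to show $H_p \leq O_p(G)$ for each prime $p$ dividing $|H|$.

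First I would translate the hypothesis via Lemma \ref{lemPSubNChar}: the divisibility $|HP| \mid |G|$ for every Sylow $P$ is exactly the statement that $H$ is $p$-subnormal for every prime $p$, i.e.\ $H \cap P$ is a Sylow $p$-subgroup of $H$ for every Sylow $p$-subgroup $P$ of $G$. Because $H$ is nilpotent, $H_p$ is its \emph{unique} Sylow $p$-subgroup, and every $p$-subgroup of $H$ is contained in $H_p$. In particular $H \cap P \leq H_p$, and combining this with the cardinality condition $|H \cap P| = |H_p|$ upgrades the inclusion to an equality $H \cap P = H_p$. Hence $H_p \leq P$.

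Since $P$ was an arbitrary Sylow $p$-subgroup of $G$, I then invoke the standard identity
\[ \bigcap_{P \in \mathrm{Syl}_p(G)} P \> = \> O_p(G) \]
to conclude $H_p \leq O_p(G)$. Taking the product over all primes yields $H = \prod_p H_p \leq \prod_p O_p(G) = F(G)$, and the proof is complete.

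I do not foresee a serious obstacle. The decisive observation is that the nilpotency of $H$ makes its $p$-Sylow unique, so $p$-subnormality forces $H_p$ into \emph{every} Sylow $p$-subgroup of $G$ (rather than merely into one up to $H$-conjugation), and this is exactly what collapses the problem onto the Fitting subgroup. No appeal to the classification of finite simple groups, nor to Kegel--Wielandt in its full generality, is required.
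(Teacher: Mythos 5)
Your proof is correct and is essentially the paper's argument: the paper proves the same thing contrapositively (a $p$-element of $H$ outside $F(G)$ would avoid some Sylow $p$-subgroup of $G$, contradicting that $H\cap P$, being a Sylow $p$-subgroup of the nilpotent group $H$, contains all $p$-elements of $H$), while you run the identical computation directly, getting $H_p\le\bigcap_{P\in\mathrm{Syl}_p(G)}P=O_p(G)$ for each $p$ and hence $H\le F(G)\lhd\lhd\,G$. Both versions rest on exactly the same two facts, so there is nothing substantive to add.
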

	\begin{proof}
	Suppose that $H$ is not subnormal, and in particular $H \nleqslant F(G)$.
	So there exists a $p$-element $x$ such that $x \in H \setminus F(G)$.
	Since $x \notin O_p(G)$, there exists a $p$-Sylow $P$ of $G$ such that $x \notin P$.
	By hypothesis $H \cap P$ is a $p$-Sylow of $H$ and, since $H$ is nilpotent, $H \cap P$ contains all the $p$-elements of $H$.
	This contradicts the fact that $x \notin P$.
	\end{proof}
	\vspace{0.1cm}
	
	Levy \cite{2022Levy} proves the same result when $H$ is a $p$-subgroup of $G$.
	Another consequence of Theorem \ref{thSubn2} is that $p$-subnormality for every $p$ implies that $|HK|$ divides $|G|$ for every $K \leqslant G$.
	We provide an elementary proof of this fact.
	
	\begin{lemma}
	Let $G$ be a finite group and $H \leqslant G$.
	If $|HP|$ divides $|G|$ for every Sylow $P \leqslant G$,
	then $|HK|$ divides $|G|$ for every $K \leqslant G$.
	\end{lemma}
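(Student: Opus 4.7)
The plan is to verify the divisibility prime by prime. Fix a prime $q$; using $|HK|=|H||K|/|H\cap K|$, the claim $v_q(|HK|)\leq v_q(|G|)$ is equivalent to
$$ v_q(|H|) \> - \> v_q(|H \cap K|) \> \leq \> v_q(|G:K|) . $$
Of course, combining Theorem \ref{thKW} with Proposition \ref{propSubn} would finish immediately, but the whole point of the lemma is an elementary argument, so I would avoid Kleidman's theorem.

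The first step is to choose a $q$-Sylow $Q$ of $K$ and extend it to a $q$-Sylow $Q'$ of $G$. Since $Q'\cap K$ is a $q$-subgroup of $K$ containing $Q$, Sylow's theorem forces $Q=Q'\cap K$, and hence $v_q(|Q':Q|)=v_q(|G:K|)$. Next, the hypothesis together with Lemma \ref{lemPSubNChar} says that $H\cap Q'$ is a $q$-Sylow of $H$, so $v_q(|H\cap Q'|)=v_q(|H|)$. Finally, $H\cap Q=(H\cap Q')\cap K\leq H\cap K$, which gives $v_q(|H\cap Q|)\leq v_q(|H\cap K|)$.

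The remaining input is the elementary fact that, for $A\leq B$ and $C\leq B$, inclusion induces an injection of left-coset spaces $A/(A\cap C)\hookrightarrow B/C$. Applied with $A=H\cap Q'$, $B=Q'$, $C=Q$, it gives that $|H\cap Q':H\cap Q|$ divides $|Q':Q|$ (both are $q$-powers, so the injection is automatically a divisibility). Chaining the three inequalities yields
\begin{align*}
v_q(|H|)\>-\>v_q(|H\cap K|) \> & \leq \> v_q(|H\cap Q'|)\>-\>v_q(|H\cap Q|) \\
\> & \leq \> v_q(|Q':Q|) \> = \> v_q(|G:K|) ,
\end{align*}
which is what was needed.

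The main subtlety is picking the right Sylow: one wants a $q$-Sylow of $G$ that simultaneously interacts well with $H$ (via the hypothesis) and with $K$ (to get a usable lower bound on $v_q(|H\cap K|)$). Starting from a $q$-Sylow of $K$ and extending makes both work at once, thanks to the identity $Q=Q'\cap K$; if one instead started from a $q$-Sylow of $H$ or of $G$, the bridge to $H\cap K$ would not be available.
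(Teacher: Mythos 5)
Your proof is correct and takes essentially the same route as the paper's: both argue one prime at a time, pick a Sylow subgroup of $K$ and extend it to a Sylow subgroup $P$ of $G$ with $P \cap K$ equal to the chosen one, use the hypothesis (via Lemma \ref{lemPSubNChar}) to see that $|H : H \cap P|$ is prime to $p$, and bound the remaining factor by comparing $|H \cap P : H \cap P \cap K|$ with $|P : P \cap K|$ inside the $p$-group $P$ --- your coset-space injection $A/(A\cap C)\hookrightarrow B/C$ is exactly the paper's step that $|(H\cap P)P_0 : P_0|$ divides $|P:P_0|$. The only difference is notational ($q$-adic valuations versus ``$p^\alpha$ divides'').
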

	\begin{proof}
	Let $K \leqslant G$. We have to show that $|HK:K|=|H:H \cap K|$ divides $|G:K|$.
	Let $p^\alpha$ be a prime power that divides $|H:H \cap K|$.
	Since $p^\alpha$ is arbitrary, it is sufficient to prove that $p^\alpha \mid |G:K|$.
	Let $P_0 \leqslant K$ be a $p$-Sylow of $K$, and let $P \leqslant G$ be a $p$-Sylow of $G$ such that $P \cap K=P_0$.
	Of course, $p^\alpha \mid |H:H \cap P_0|$.
	By hypothesis $|H:H \cap P|=|HP : P|$ divides $|G:P|$, and so is not divisible by $p$.
	Therefore, $p^\alpha \mid |H \cap P:H \cap P_0|$.
	Now $|H \cap P:H \cap P_0|=|(H \cap P)P_0:P_0|$,
	and this divides $|P:P_0|$ because $P$ is a $p$-group.
	So $p^\alpha \mid |P:P_0|$, and then of course $p^\alpha \mid |G:P_0|$.
	Since $p \nmid |K:P_0|$, we obtain $p^\alpha \mid |G:K|$ as desired.
	\end{proof}
	\vspace{0.1cm}
	
	\end{subsection}
	
	\end{section}

	\vspace{0.2cm}
	\begin{section}{Exponential subgroups} \label{sectExpSub}
	
	We write $H \leqslant_{exp} G$ if $x^{|G:H|} \in H$ for all $x \in G$.
	We observe immediately that exponentiality is preserved by quotients.
	
	\begin{lemma} \label{lemExpQuot}
 Let $N \lhd G$, and $N \leqslant H \leqslant G$. Then $H \leqslant_{exp} G$ if and only if $H/N \leqslant_{exp} G/N$.
 \end{lemma}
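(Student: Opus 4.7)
The plan is to reduce everything to the correspondence theorem, which gives the index identity $|G:H|=|G/N:H/N|$ whenever $N \lhd G$ and $N \leqslant H \leqslant G$. In particular, $H \leqslant_f G$ if and only if $H/N \leqslant_f G/N$, so the finite-index hypothesis passes through quotients cleanly, and the exponent used in both conditions is literally the same integer $n:=|G:H|=|G/N:H/N|$.

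For the ``only if'' direction, assume $H \leqslant_{exp} G$. Pick any coset representative, i.e.\ any $\bar{x}=xN \in G/N$ with $x \in G$. Then $\bar{x}^{n}=x^{n}N$, and by hypothesis $x^{n}\in H$, so $x^{n}N \in H/N$. Since $\bar{x}$ was arbitrary, $H/N \leqslant_{exp} G/N$.

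For the ``if'' direction, assume $H/N \leqslant_{exp} G/N$ and let $x \in G$. Applying the hypothesis to $\bar{x}=xN$ yields $x^{n}N=\bar{x}^{n}\in H/N$, which means $x^{n}\in hN$ for some $h\in H$. Because $N \leqslant H$, this forces $x^{n}\in H$, as required.

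The only thing to watch is that the two exponents really coincide, but this is immediate from $N \leqslant H$ and the correspondence theorem, so there is no genuine obstacle in the argument; the lemma is essentially a bookkeeping statement that exponentiality is a property of the lattice interval $[N,G]$ seen inside $G/N$.
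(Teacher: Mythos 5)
Your proof is correct and follows essentially the same route as the paper's: both rest on the index equality $|G:H|=|G/N:H/N|$ from the correspondence theorem, the identity $(xN)^{n}=x^{n}N$, and the containment $N\leqslant H$ to pull the conclusion back in the ``if'' direction. No further comment is needed.
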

 \begin{proof}
 Let $x \in G$ and $H \leqslant_{exp} G$. Then $(Nx)^{|G/N:H/N|}=Nx^{|G:H|} \in H/N$ and so $H/N \leqslant_{exp} G/N$.
 If $H/N \leqslant_{exp} G/N$, then $Nx^{|G:H|} = (Nx)^{|G/N:H/N|} \in H$, and so $x^{|G:H|} \in H$.
 \end{proof}
 \vspace{0.1cm}
	
	Since exponential subgroups have finite index,
	we can apply Lemma \ref{lemExpQuot} with the normal core, and work with a finite group.
	Let $G$ be a finite group and $H \leqslant G$. From Corollary \ref{corCrit} and Theorem \ref{thSubn2}, we have
	
	\begin{itemize}
	\item $H \lhd \lhd \> G$ if and only if $|HK|$ divides $|G|$ for every $K \leqslant G$;
	\item $H \leqslant_{exp} G$ if and only if $|HC|$ divides $|G|$ for every cyclic $C \leqslant G$.
	\end{itemize}
	
	We stress that $H \leqslant_{exp} G$ whenever $|G:H|$ is a multiple of the exponent $\exp(G)$.
	
	\begin{remark}
	Every finite group of order other than a prime has a non-trivial exponential subgroup:
	if $\exp(G)<|G|$, then it is sufficient to take any subgroup whose order divides $|G|/\exp(G)$.
	Otherwise, all the Sylow subgroups of $G$ are cyclic,
	and it is well known that $G$ is solvable.
	In particular, $G$ has a non-trivial normal subgroup, which is certainly exponential.\\
	We notice a difference with the stronger condition that $HK$ is a subgroup for every $K$
	i.e. $H$ is a {\itshape permutable} subgroup.
	Indeed, it is easy to prove that if $HC$ is a subgroup for every cyclic $C \leqslant G$, 
	 then $HK$ is a subgroup for every $K \leqslant G$.
	\end{remark}
	 \vspace{0.1cm}
	
	 For every $n \geq 1$, let $G^n := \langle \{ x^n : x \in G \} \rangle$.
	The exponential subgroups of $G$ of index $n$ are in correspondence with the subgroups of $G/G^n$ of index $n$.
	Since $G^n$ is characteristic, the property of being exponential is preserved by automorphisms.
	Moreover, we have the following
	
	\begin{lemma} \label{lemNew}
	Let $H \leqslant G$ have a trivial characteristic core.
	Then $H \leqslant_{exp} G$ if and only if $|G:H|$ is a multiple of the exponent of $G$.
	\end{lemma}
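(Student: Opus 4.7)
The plan is to exploit the fact, just noted by the author before the lemma, that $G^n$ is a characteristic subgroup of $G$ for every $n \geq 1$. The whole statement is essentially a tautological reformulation once one observes that $H \leqslant_{exp} G$ is equivalent to the inclusion $G^{|G:H|} \leqslant H$.

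The ``if'' direction is immediate and requires no work: if $\exp(G)$ divides $|G:H|$, then $x^{|G:H|}=1 \in H$ for every $x \in G$, so $H \leqslant_{exp} G$ by definition.

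For the ``only if'' direction, assume $H \leqslant_{exp} G$. By definition $x^{|G:H|} \in H$ for every $x \in G$, so the subgroup $G^{|G:H|} = \langle x^{|G:H|} : x \in G \rangle$ is contained in $H$. Since $G^{|G:H|}$ is characteristic in $G$ and is contained in $H$, it is contained in the characteristic core of $H$, which by hypothesis is trivial. Therefore $G^{|G:H|}=1$, which means $x^{|G:H|}=1$ for every $x \in G$; equivalently, the order of every element of $G$ divides $|G:H|$, so $\exp(G) \mid |G:H|$.

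There is no real obstacle to this argument: once one notices that $H \leqslant_{exp} G$ is the same as saying that the characteristic subgroup $G^{|G:H|}$ sits inside $H$, the triviality of the characteristic core forces $G^{|G:H|}=1$ and the conclusion follows. The only thing to be slightly careful about is that $|G:H|$ is automatically finite (since exponential subgroups have finite index), so the exponent condition $\exp(G)\mid |G:H|$ is a meaningful arithmetic statement and in particular implies that $\exp(G)$ is finite.
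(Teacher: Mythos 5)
Your proof is correct and follows exactly the paper's argument: exponentiality gives $G^{|G:H|}\leqslant H$, and since $G^{|G:H|}$ is characteristic in $G$ and the characteristic core of $H$ is trivial, $G^{|G:H|}=1$, i.e.\ $\exp(G)\mid|G:H|$, with the converse trivial. Nothing further to add.
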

	\begin{proof}
	Let $n=|G:H|$.
	By the exponentiality of $H$ we have $G^n \leqslant H$.
	Since $G^n$ is a characteristic subgroup of $G$ contained in $H$, we obtain $G^n=1$.
	But this means exactly that $n$ is a multiple of $\exp(G)$.
	The converse is trivial.
	\end{proof}
	\vspace{0.1cm}
	
	In general, there exist non-subnormal exponential subgroups whose index is not a multiple of the exponent.
	 A simple example is $G=C_4 \times \Sym(3)$ and $H \cong C_2 \times C_2$.
	 The following corollaries of Lemma \ref{lemNew} are obtained with the same strategy.
	
\begin{corollary}
	Let $H \leqslant G$ be a Hall subgroup. If $H \leqslant_{exp} G$, then $H \lhd G$.
	\end{corollary}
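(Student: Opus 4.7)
The plan is to imitate the strategy suggested by Lemma \ref{lemNew}: reduce to the case where the characteristic core of $H$ in $G$ is trivial, apply Lemma \ref{lemNew}, and then use the Hall hypothesis to force $H$ itself to be characteristic.

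First, let $C$ denote the characteristic core of $H$ in $G$, i.e.\ the largest characteristic subgroup of $G$ contained in $H$ (the join of all such subgroups is characteristic). Since $C \lhd G$ and $C \leqslant H$, Lemma \ref{lemExpQuot} gives $H/C \leqslant_{exp} G/C$. Moreover, $H/C$ is still a Hall subgroup of $G/C$: its order divides $|H|$ and its index equals $|G:H|$, so the coprimality is inherited. A routine pullback check shows that $H/C$ has trivial characteristic core in $G/C$, since a characteristic subgroup of $G/C$ contained in $H/C$ pulls back to a characteristic subgroup of $G$ contained in $H$, hence to $C$ itself.

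Now I apply Lemma \ref{lemNew} to $H/C \leqslant G/C$: it yields that $|G:H| = |G/C : H/C|$ is a multiple of $\exp(G/C)$. Suppose for contradiction that $H \neq C$, and pick a prime $p$ dividing $|H/C|$. By Cauchy's theorem, $H/C$ contains an element of order $p$, so $p \mid \exp(G/C) \mid |G:H|$. On the other hand $p \mid |H/C| \mid |H|$, contradicting the assumption that $H$ is a Hall subgroup. Therefore $H = C$, so $H$ is characteristic in $G$, and in particular $H \lhd G$.

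The argument is essentially formal once one chooses the \emph{characteristic} core (rather than the normal core) as the right subgroup to quotient by; that is what allows Lemma \ref{lemNew} to be used directly. The only step requiring care is verifying that passing to $G/C$ preserves both the Hall property and the triviality of the characteristic core, but both are immediate from the definitions, so I do not anticipate a genuine obstacle.
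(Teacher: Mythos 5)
Your proof is correct and follows essentially the same route as the paper: reduce modulo a core inside $H$ via Lemma \ref{lemExpQuot}, apply Lemma \ref{lemNew}, and contradict the Hall coprimality of $|H|$ and $|G:H|$ with a prime dividing both. The only cosmetic difference is that you quotient by the characteristic core in one step (concluding $H=C$ is characteristic), whereas the paper quotients by the normal core and runs an induction; both versions are sound.
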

	\begin{proof}
	Suppose that $H$ is not normal, and let $N \lhd G$ be the normal core of $H$.
	Since $H/N$ is a Hall subgroup of $G/N$, by induction and Lemma \ref{lemExpQuot}, we can assume that $H$ is core-free.
	Now $\exp(G)$ captures every prime dividing $|G|$, and so the contradiction is given by Lemma \ref{lemNew}.
	\end{proof}
	
	\begin{corollary} \label{corMaxExp}
	Let $M \leqslant G$ be a maximal subgroup of the solvable group $G$. If $M \leqslant_{exp} G$, then $M \lhd G$.
	\end{corollary}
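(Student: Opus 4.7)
The plan follows the blueprint of the Hall-subgroup corollary: reduce to a core-free situation, apply Lemma~\ref{lemNew}, and derive a contradiction from the structure of solvable groups.

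First, I would invoke Lemma~\ref{lemExpQuot}. Let $N \lhd G$ be the normal core of $M$; then $M/N$ is a maximal subgroup of $G/N$, the quotient is still solvable, and $M/N \leqslant_{exp} G/N$ by Lemma~\ref{lemExpQuot}. Since normality of $M$ in $G$ is equivalent to normality of $M/N$ in $G/N$, I may replace $G$ by $G/N$ and assume from the start that $M$ is core-free. In particular, $M$ then has trivial characteristic core (every characteristic subgroup is normal), which puts us in position to invoke Lemma~\ref{lemNew}.

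Next, I would use the standard structure of solvable groups with a core-free maximal subgroup. Pick a minimal normal subgroup $L$ of $G$: since $G$ is solvable, $L$ is an elementary abelian $p$-group for some prime $p$. The fact that $M$ is core-free forces $L \nleqslant M$, and then maximality gives $G = ML$. Hence
\[
|G:M| \> = \> |L : L \cap M|
\]
is a power of $p$. By Lemma~\ref{lemNew}, the exponentiality of $M$ combined with triviality of its characteristic core forces $\exp(G)$ to divide $|G:M|$, and therefore $\exp(G)$ is a $p$-power.

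Finally, I would close the argument by observing that if $\exp(G)$ is a $p$-power, every element of $G$ has $p$-power order, so $G$ itself is a finite $p$-group. But every maximal subgroup of a finite $p$-group is normal (since $p$-groups are nilpotent), contradicting the assumption that $M$ was non-normal before the reduction. The main technical point to be careful about is the reduction step: one must check that passing to $G/N$ preserves maximality, solvability, and exponentiality simultaneously, and that the resulting core-free subgroup's \emph{characteristic} core (not just its normal core) is trivial, which is what Lemma~\ref{lemNew} actually requires. The rest is a short chain of standard facts about solvable groups and nilpotent groups.
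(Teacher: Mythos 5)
Your proposal is correct and follows essentially the same route as the paper: reduce to the core-free case via Lemma~\ref{lemExpQuot}, note that the index is then a prime power, and use Lemma~\ref{lemNew} to force $\exp(G)$ to be a $p$-power, whence $G$ is a $p$-group and its maximal subgroups are normal. The only difference is cosmetic: you derive the prime-power index from a minimal normal subgroup rather than quoting the classical fact for (finite) solvable groups, and you should just note explicitly that after the core-free reduction $G$ is finite (faithful action on finitely many cosets), which is what legitimizes both the minimal normal subgroup step and the conclusion that $G$ is a finite $p$-group.
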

	\begin{proof}
	Suppose that $M$ is not normal, and let $N \lhd G$ be the normal core of $M$.
	Since $M/N$ is a maximal subgroup of $G/N$, by induction and Lemma \ref{lemExpQuot}, we can assume that $M$ is core-free.
	Now $|G:M|=q^\alpha$ for some prime power $q^\alpha$.
	If $G$ is a $q$-group we are done. Otherwise, the contradiction is given by Lemma \ref{lemNew}.
	\end{proof}
	\vspace{0.1cm}
	
	We cannot drop the hypothesis of solvability in Corollary \ref{corMaxExp}:
	the alternating group $G=\Alt(10)$ has a conjugacy class of maximal subgroups $M$ of size $720$.
	 Since $\exp(G)=2520=|G:M|$, it appears that $M$ is an exponential maximal subgroup which is not normal.\\
	We conclude this section with the hereditary properties of exponential subgroups.

\begin{lemma}
The following are true:
\begin{itemize}
\item If $H \leqslant_{exp} M \leqslant_{exp} G$, then $H \leqslant_{exp} G$;
\item The intersection of exponential subgroups is exponential.
\end{itemize}
\end{lemma}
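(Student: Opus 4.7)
My plan is to prove both parts directly from the definition, exploiting the multiplicativity of indices. Neither part should present a real obstacle; the main observation is simply that if $H \leqslant_{exp} G$, then $x^n \in H$ for every positive multiple $n$ of $|G:H|$, because $H$ is a subgroup and therefore closed under powers of its elements.

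For the first (transitivity) bullet, I would start from $H \leqslant_{exp} M \leqslant_{exp} G$ and use the identity
$$ |G:H| \> = \> |G:M| \cdot |M:H| , $$
which is valid since $M$ has finite index in $G$ and $H$ has finite index in $M$. Given $x \in G$, set $y := x^{|G:M|}$. Exponentiality of $M$ in $G$ yields $y \in M$, and then exponentiality of $H$ in $M$, applied to $y$, gives
$$ y^{|M:H|} \> = \> x^{|G:M| \cdot |M:H|} \> = \> x^{|G:H|} \> \in \> H . $$

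For the second bullet, I would first reduce by induction to an intersection of two exponential subgroups, using the fact recalled in Section \ref{sectPrelim} that the intersection of two subgroups of finite index again has finite index. Given $H_1, H_2 \leqslant_{exp} G$ and $H := H_1 \cap H_2$, I would write $|G:H| = |G:H_i| \cdot |H_i:H|$ for each $i \in \{1,2\}$. Then for every $x \in G$,
$$ x^{|G:H|} \> = \> \bigl( x^{|G:H_i|} \bigr)^{|H_i:H|} \> \in \> H_i , $$
because $x^{|G:H_i|} \in H_i$ by exponentiality of $H_i$. Since this holds for both $i$, we conclude $x^{|G:H|} \in H_1 \cap H_2 = H$, as required. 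Equivalently, one may phrase the argument through Lemma \ref{lemRelOrd}: exponentiality of $H_i$ in $G$ is the statement $o_{H_i}(x) \mid |G:H_i|$, which divides $|G:H|$, and hence $x^{|G:H|} \in H_i$.
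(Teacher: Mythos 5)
Your proposal is correct and follows essentially the same route as the paper: transitivity via $x^{|G:H|}=\bigl(x^{|G:M|}\bigr)^{|M:H|}$ using multiplicativity of the index, and the intersection statement via the observation that $|G:H_1\cap H_2|$ is a multiple of each $|G:H_i|$, so that the corresponding power of $x$ lands in both subgroups. The only cosmetic difference is that you spell out the reduction to two subgroups and the alternative phrasing through Lemma \ref{lemRelOrd}, which the paper leaves implicit.
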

\begin{proof}
Let $x \in G$. Since $M \leqslant_{exp} G$, we have $m=x^{|G:M|} \in M$.
Then $x^{|G:H|}=m^{|M:H|} \in H$.
To prove the second statement, it is sufficient to notice that $|G:H \cap K|$ is a multiple of both $|G:H|$ and $|G:K|$.
\end{proof}
\vspace{0.1cm}

	Other important properties of the lattice of the subnormal subgroups are not true for exponential subgroups,
	and the dihedral group $G=D_{12}$ is a good source of counterexamples.
	Every subgroup of $G$ whose order is $2$ is exponential in $G$, since $\exp(G) = 6$.
Let $H$ be any non-central subgroup of order $2$. Now
	
	\begin{itemize}
	\item The subgroup $H_1 = \langle H,Z(G) \rangle \cong C_2 \times C_2$
 provides a counterexample to the statement that two exponential subgroups generate an exponential subgroup:
 choosing any involution $x \in  G \setminus H_1$ we get $x^{|G:H_1|} = x \notin H_1$.
 \item The subgroup $H_2$ which satisfies $H < H_2 \cong \Sym(3)$ provides a counterexample to the statement that
the intersection of an exponential subgroup of $G$ with any subgroup of $G$ is exponential in that subgroup:
 choosing any involution $x \in H_2 \setminus H$, we get that $H$ is not exponential in $H_2$ although it is exponential in $G$.
	\end{itemize}
	
	\end{section}

	\vspace{0.2cm}
	\begin{section}{The set $S(G)$} \label{sectSG}
	
	Let us recall the definition of $S(G)$ given in the introduction: 
	$$ S(G) \> := \> \{ x \in G : x^{|G:H|} \in H \mbox{ for every } H \leqslant_f G \} . $$
	From Corollary \ref{corCrit}, we have
	$$ S(G) \> = \> \{ x \in G : |H \langle x \rangle:H| \mbox{ divides } |G:H| \mbox{ for every } H \leqslant_f G \} . $$
	The results of Section \ref{sectPS} allow to settle the finite case easily:
	 
	\begin{proposition} \label{propFitting}
	If $G$ is finite, then $S(G)=F(G)$.
	\end{proposition}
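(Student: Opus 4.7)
The plan is to establish the two inclusions separately, using the machinery of Section \ref{sectPS} essentially as a black box.

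For $F(G) \subseteq S(G)$: fix $x \in F(G)$. Since $G$ is finite, $F(G)$ itself is nilpotent, so every subgroup of $F(G)$ is subnormal in $F(G)$, and in particular $\langle x \rangle \lhd \lhd F(G) \lhd G$ gives $\langle x \rangle \lhd \lhd G$. Proposition \ref{propSubn} applied to $\langle x \rangle$ yields that $|H\langle x \rangle : H|$ divides $|G:H|$ for every $H \leqslant G$; by Corollary \ref{corCrit} this says exactly $x^{|G:H|} \in H$, so $x \in S(G)$.

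For the reverse inclusion $S(G) \subseteq F(G)$: pick $x \in S(G)$. Testing the membership criterion of Corollary \ref{corCrit} at each Sylow subgroup $P$ of $G$, we obtain that $|\langle x \rangle P|$ divides $|G|$. Since $\langle x \rangle$ is cyclic and hence nilpotent, Lemma \ref{lemKleidNilp} gives $\langle x \rangle \lhd \lhd G$. It then remains to upgrade subnormality of the cyclic subgroup to membership in $F(G)$.

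The last step is the only delicate point, because the excerpt defines $F(G)$ via nilpotency of $\langle x \rangle^G$. A clean route is the primary decomposition: write $x = \prod_p x_p$ as the product of its commuting $p$-parts, each of which lies in $\langle x \rangle$. Each $\langle x_p \rangle$ is characteristic in $\langle x \rangle$, and therefore remains subnormal in $G$. A subnormal $p$-subgroup of a finite group is contained in $O_p(G)$ (a standard consequence of Wielandt's theorem that the join of subnormal $p$-subgroups is a subnormal $p$-subgroup, applied to the $G$-conjugates), so $x_p \in O_p(G)$ for each prime $p$, and $x \in \prod_p O_p(G) = F(G)$. This is essentially the same fact about subnormal nilpotent subgroups that is tacitly invoked in the proof of Lemma \ref{lemKleidNilp}, so the argument is internally consistent with the framework already set up.
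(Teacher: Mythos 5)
Your proof is correct and takes essentially the same route as the paper: Corollary \ref{corCrit} translates membership in $S(G)$ into the divisibility conditions, Proposition \ref{propSubn} gives $F(G)\subseteq S(G)$, and Lemma \ref{lemKleidNilp} applied to the nilpotent subgroup $\langle x\rangle$ gives the reverse inclusion. The only difference is that you spell out, via the primary decomposition and the fact that subnormal $p$-subgroups lie in $O_p(G)$, the standard identification of $F(G)$ with the set of elements whose cyclic subgroup is subnormal, which the paper invokes without proof in the step ``$\langle x\rangle \lhd\lhd\, G$, i.e.\ $x\in F(G)$''.
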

	\begin{proof}
	Let $x \in G$. Then $x \in S(G)$ if and only if $|H \langle x \rangle|$ divides $|G|$ for every $H \leqslant G$.
	From Proposition \ref{propSubn} and Lemma \ref{lemKleidNilp}, this is equivalent to $\langle x \rangle \lhd \lhd \> G$, i.e. $x \in F(G)$.
	\end{proof}
	\vspace{0.1cm}

	\begin{subsection}{A top-down approach}
	
	Let $G$ be an arbitrary group and let $R(G) = \cap_{H \leqslant_f G} H$ be its finite residual.
	The condition in the definition of $S(G)$ is empty on $R(G)$, and so $R(G) \subseteq S(G)$.
	In fact, $S(G)$ is the preimage of $S(G/R(G))$ under the projection $G \twoheadrightarrow G/R(G)$. 
	
	 \begin{lemma} \label{lemReductionRF}
	Let $N \lhd G$.
	Then $S(G/N) = \{ Nx : x^{|G:H|} \in H \mbox{ for every } N \leqslant H \leqslant_f G \}$.
	In particular, $S(G/R(G)) = S(G)/R(G)$.
	\end{lemma}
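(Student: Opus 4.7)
The plan is to unwind the definition of $S(G/N)$ through the correspondence theorem and then specialize to $N = R(G)$. First I would observe that the finite-index subgroups of $G/N$ are exactly the quotients $H/N$ with $N \leqslant H \leqslant_f G$, and that $|G/N : H/N| = |G:H|$. So $Nx \in S(G/N)$ translates directly into the condition
\[
(Nx)^{|G:H|} \in H/N \quad \text{for every } N \leqslant H \leqslant_f G.
\]
Since $N \lhd G$, we have $(Nx)^{|G:H|} = N x^{|G:H|}$, and because $N \subseteq H$, the membership $N x^{|G:H|} \in H/N$ is equivalent to $x^{|G:H|} \in H$. This gives the first displayed equality.

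For the ``in particular'' statement, I would plug in $N = R(G)$ and use the definition $R(G) = \bigcap_{H \leqslant_f G} H$. The point is that every finite-index subgroup $H$ of $G$ automatically contains $R(G)$, so the side condition $R(G) \leqslant H$ in the first part of the lemma is vacuous. Hence
\[
S(G/R(G)) \;=\; \{ R(G)x : x^{|G:H|} \in H \text{ for every } H \leqslant_f G \} \;=\; \{ R(G)x : x \in S(G) \},
\]
which is precisely $S(G)/R(G)$ (viewed as a subset of $G/R(G)$).

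The whole argument is essentially bookkeeping with the correspondence theorem; there is no real obstacle. The only point worth flagging is to explain once why every element of $S(G)$ is a union of cosets of $R(G)$ (so that $S(G)/R(G)$ makes sense as a subset of $G/R(G)$): if $x \in S(G)$ and $r \in R(G)$, then for every $H \leqslant_f G$ we have $r \in H$ and hence $(xr)^{|G:H|} \equiv x^{|G:H|} \pmod{H}$ is not the right manipulation since $G$ need not be abelian, so I would argue instead that $R(G) \subseteq S(G)$ (the defining condition is vacuous on $R(G)$) and that $S(G)$ is closed under right multiplication by $R(G)$ via the identity $\langle xr \rangle \subseteq \langle x \rangle R(G) \subseteq \langle x \rangle H$ whenever $H$ has finite index, giving $o_H(xr) \mid o_H(x) \cdot o_H(r) = o_H(x)$. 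This justifies writing $S(G)/R(G)$ as a subset of $G/R(G)$ and completes the identification.
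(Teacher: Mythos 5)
Your main argument is correct and is essentially identical to the paper's proof: both rest on the correspondence between the finite-index subgroups of $G/N$ and the subgroups $N\leqslant H\leqslant_f G$, the identity $(Nx)^{|G:H|}=Nx^{|G:H|}$, and the observation that every $H\leqslant_f G$ contains $R(G)$, so that the constraint $R(G)\leqslant H$ is vacuous.

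Two comments on your closing paragraph, which is the only place where something goes astray. First, the verification that $S(G)$ is a union of $R(G)$-cosets is not needed: the equivalence you have already established (applied with $N=R(G)$) says precisely that $x\in S(G)$ if and only if $R(G)x\in S(G/R(G))$, so $S(G)$ is the full preimage of $S(G/R(G))$ under the projection and is therefore automatically a union of $R(G)$-cosets; $S(G)/R(G)$ then simply denotes its image. Second, the justification you offer there is not valid as stated: from $\langle xr\rangle\subseteq\langle x\rangle R(G)\subseteq\langle x\rangle H$ one can only conclude $o_H(xr)\leq o_H(x)$ (the cyclic group $\langle xr\rangle$ meets at most $o_H(x)$ cosets of $H$), and containment in $\langle x\rangle H$ does not force the divisibility $o_H(xr)\mid o_H(x)\,o_H(r)$. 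For instance, in $G=\Sym(3)$ with $H=\langle(1\,2)\rangle$, $x=(1\,2\,3)$ and $y=(1\,3)$, one has $\langle y\rangle\subseteq\langle x\rangle H=G$ but $o_H(y)=2\nmid 3=o_H(x)$. The fact you want is nevertheless true, and the correct direct argument uses that $R(G)$ is normal: if $x^n\in H$ and $r\in R(G)\leqslant H$, then $(xr)^n=x^n\,r^{x^{n-1}}r^{x^{n-2}}\cdots r^{x}\,r\in H$, since every conjugate $r^{x^{i}}$ lies in $R(G)\leqslant H$; hence $o_H(xr)\mid o_H(x)$ by Lemma \ref{lemRelOrd}, and $xr\in S(G)$ whenever $x\in S(G)$.
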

	\begin{proof}
	Let $x \in G$ and $N \leqslant H \leqslant_f G$.
	The equality $(Nx)^{|G:H|} = Nx^{|G:H|}$ implies that $Nx \in H/N$ if and only if $x^{|G:H|} \in H$,
	and the first part follows because $H$ is arbitrary.
	The second part follows because $R(G)$ contains all the subgroups of $G$ of finite index.
	\end{proof}
	\vspace{0.1cm}
	
	As a consequence of Lemma \ref{lemReductionRF}, we can assume that $G$ is residually finite.
	Given $N \lhd G$, let $F_N(G)$ be the preimage of $F(G/N)$.
	
	\begin{proof}[Proof of Theorem \ref{thSExp}]
	We have to prove that $S(G) = \cap_{N \lhd_f G} F_N(G)$.
	Let $x \in S(G)$ and $N \lhd_f G$.
	From Lemma \ref{lemReductionRF} and Proposition \ref{propFitting} we have $Nx \in S(G/N)=F(G/N)$, i.e. $x \in F_N(G)$.\\
	On the other hand, let $x \in \cap_{N \lhd_f G} F_N(G)$ and $H \leqslant_f G$.
	If $N \lhd_f G$ is the normal core of $H$, then in particular $x \in F_N(G)$.
	From Proposition \ref{propFitting} we have
	$$ Nx \in \frac{F_N(G)}{N} = F(G/N) = S(G/N) , $$
	and so Lemma \ref{lemReductionRF} provides $x^{|G:H|} \in H$.
	The proof follows because $H$ is arbitrary.
	\end{proof}
	\vspace{0.1cm}
	
	The following observation deletes a bunch of terms from $\cap_{N \lhd_f G} F_N(G)$.
	
	\begin{lemma}
	Let $G$ be a finite group and $N \lhd G$. Then $F(G) \leqslant F_N(G)$.
	\end{lemma}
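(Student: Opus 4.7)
The plan is to verify that the image of $F(G)$ in the quotient $G/N$ lies inside the Fitting subgroup of $G/N$, and then pull this containment back along the projection $G \twoheadrightarrow G/N$. Since $F_N(G)$ is by definition the preimage of $F(G/N)$, this will give exactly $F(G) \leqslant F_N(G)$.

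First I would recall, as mentioned in Section~\ref{sectPrelim}, that for a finite group the Fitting subgroup coincides with the largest nilpotent normal subgroup. Consequently, to show that $F(G)N/N \leqslant F(G/N)$ it is enough to verify that $F(G)N/N$ is a nilpotent normal subgroup of $G/N$. Normality is immediate from the fact that $F(G)$ is characteristic (hence normal) in $G$, so $F(G)N$ is normal in $G$ and its image is normal in $G/N$. Nilpotency is also immediate: $F(G)N/N \cong F(G)/(F(G) \cap N)$ is a quotient of the nilpotent group $F(G)$, and nilpotency is preserved by quotients.

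Having established $F(G)N/N \leqslant F(G/N)$, I would conclude by taking preimages under the projection $G \to G/N$. The preimage of $F(G)N/N$ is $F(G)N$, which contains $F(G)$; and the preimage of $F(G/N)$ is exactly $F_N(G)$. Therefore $F(G) \leqslant F(G)N \leqslant F_N(G)$, as desired.

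There is no real obstacle here: the argument is a direct application of the fact that the image of a nilpotent normal subgroup under a quotient map is again nilpotent and normal, combined with the finiteness of $G$ (which ensures $F(G/N)$ absorbs every nilpotent normal subgroup of $G/N$). The only mild subtlety is remembering to use the image $F(G)N/N$ rather than $F(G)/N$, since $N$ need not be contained in $F(G)$.
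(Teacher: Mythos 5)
Your proof is correct and follows essentially the same route as the paper: identify $F(G)N/N \cong F(G)/(F(G)\cap N)$ as a nilpotent normal subgroup of $G/N$, conclude it lies in $F(G/N)$, and pull back to get $F(G) \leqslant F(G)N \leqslant F_N(G)$. No issues.
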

	\begin{proof}
	We have that $NF(G)/N \cong F(G)/(N \cap F(G))$ is a nilpotent normal subgroup of $G/N$.
	Then $NF(G)/N \leqslant F(G/N) = F_N(G)/N$, and so $NF(G) \leqslant F_N(G)$.
	\end{proof}
	
	\begin{corollary}
	If $N,K \lhd_f G$ and $K \leqslant N$, then $F_K(G) \leqslant F_N(G)$.
	\end{corollary}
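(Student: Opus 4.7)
The plan is to reduce the statement to the previous lemma by passing to the finite quotient $G/K$. Since $K \lhd_f G$, the quotient $G/K$ is finite, and $N/K$ is a normal subgroup of $G/K$. Applying the previous lemma inside the finite group $G/K$ with the normal subgroup $N/K$, I would obtain
\[
F(G/K) \; \leqslant \; F_{N/K}(G/K),
\]
where by definition the right-hand side is the preimage of $F\bigl((G/K)/(N/K)\bigr)$ under the natural projection $G/K \twoheadrightarrow (G/K)/(N/K)$.

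The second step is to identify the two different ``preimages'' via the third isomorphism theorem. The canonical isomorphism $(G/K)/(N/K) \cong G/N$ makes the diagram of projections $G \twoheadrightarrow G/K \twoheadrightarrow G/N$ commutative with $G \twoheadrightarrow G/N$. Under this identification, $F_{N/K}(G/K)$ consists of those cosets $Kx \in G/K$ whose image $Nx$ in $G/N$ lies in $F(G/N)$; equivalently, $F_{N/K}(G/K) = F_N(G)/K$.

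Putting these two steps together, if $x \in F_K(G)$ then $Kx \in F(G/K)$, and by the previous lemma $Kx \in F_{N/K}(G/K) = F_N(G)/K$, which gives $x \in F_N(G)$. This shows $F_K(G) \leqslant F_N(G)$. I expect no real obstacle here: both ingredients (finiteness of $G/K$ and the bookkeeping of preimages through the tower $G \to G/K \to G/N$) are routine, and the only minor care required is to verify that $F_{N/K}(G/K)$, as defined in the paper relative to the group $G/K$ and its normal subgroup $N/K$, is consistent with $F_N(G)/K$ as a subgroup of $G/K$, which follows immediately from the third isomorphism theorem.
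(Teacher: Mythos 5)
Your proof is correct and follows exactly the route the paper intends: the corollary is stated without proof as an immediate consequence of the preceding lemma, applied to the finite quotient $G/K$ with normal subgroup $N/K$, and your identification $F_{N/K}(G/K)=F_N(G)/K$ via the third isomorphism theorem is precisely the bookkeeping that makes this work.
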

	\vspace{0.1cm}
	
	As a particular case of Theorem \ref{thSExp}, we have
	
	\begin{proposition} \label{propEquiv}
	Let $G$ be a group. The following are equivalent:
	
	\begin{itemize}
	\item[(A)] $G=S(G)$;
	\item[(B)] every subgroup of finite index of $G$ is exponential;
	\item[(C)] every finite quotient of $G$ is nilpotent;
	\item[(D)] every subgroup of finite index of $G$ is subnormal.
	\end{itemize}
	
	\end{proposition}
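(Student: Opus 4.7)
The plan is to use Theorem \ref{thSExp} as the hub and show that each of the other conditions is equivalent to $G=S(G)$ through short direct arguments. The equivalence (A)$\iff$(B) is really just unwinding the definitions; the real content lies in (A)$\iff$(C) via the Fitting-preimage description, and in (C)$\iff$(D) via the classical fact that a finite group all of whose subgroups are subnormal is nilpotent.

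First I would observe that (A)$\iff$(B) is immediate from the definition of $S(G)$: saying that $G=S(G)$ means that for every $x\in G$ and every $H\leqslant_f G$ we have $x^{|G:H|}\in H$, which is precisely the statement that every $H\leqslant_f G$ is exponential.

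Next I would derive (A)$\iff$(C) from Theorem \ref{thSExp}. By that theorem, $G=S(G)$ if and only if $G=F_N(G)$ for every $N\lhd_f G$; since $F_N(G)$ is the preimage of $F(G/N)$ in $G$, this is equivalent to $F(G/N)=G/N$ for every such $N$, i.e.\ to every finite quotient of $G$ being equal to its own Fitting subgroup. As $G/N$ is finite, this is the same as $G/N$ being nilpotent, giving (C).

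Finally, for (C)$\iff$(D), the forward direction uses the normal core: given $H\leqslant_f G$, its core $N$ is normal of finite index, so by (C) the finite group $G/N$ is nilpotent, hence $H/N$ is subnormal in $G/N$ and therefore $H\lhd\lhd G$. For the converse, given any $N\lhd_f G$, subgroups of the finite group $G/N$ correspond to subgroups of $G$ containing $N$, all of which have finite index, hence are subnormal in $G$ by (D); so every subgroup of $G/N$ is subnormal. The main (and only) non-elementary step is here: I would invoke the classical theorem that a finite group in which every subgroup is subnormal is nilpotent, to conclude that $G/N$ is nilpotent, yielding (C).

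The only point that is not completely routine is the appeal to ``every subgroup subnormal implies nilpotent'' in the last step; everything else is a direct consequence of Theorem \ref{thSExp} together with the definition of the Fitting subgroup and the normal core construction.
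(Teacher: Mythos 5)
Your proposal is correct and follows essentially the same route as the paper, which simply states that the proposition ``follows easily from Theorem \ref{thSExp}'': you use that theorem as the hub for (A)$\iff$(C), with (A)$\iff$(B) being a restatement of the definitions and (C)$\iff$(D) handled by the normal core together with the classical fact that a finite group all of whose subgroups are subnormal is nilpotent. The details you supply are exactly the ones the paper leaves to the reader.
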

		\begin{proof}
		This follows easily from Theorem \ref{thSExp}.
		\end{proof}
	 \vspace{0.1cm}
	
	We say that a group $G$ is $S$-free if $S(G)=1$.

\begin{lemma}
Let $G$ be a group which is residually $S$-free. Then $S(G)=1$.
\end{lemma}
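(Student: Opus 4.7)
The plan is to exploit the functoriality of $S$ under quotients: I will first show that for every $N \lhd G$ the natural projection sends $S(G)$ into $S(G/N)$, and then the definition of residually $S$-free will finish the proof immediately.

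For the key step, I take $x \in S(G)$ and an arbitrary finite-index subgroup $\bar H \leqslant_f G/N$, which I write as $\bar H = H/N$ with $N \leqslant H \leqslant_f G$. Noting that $|G/N : \bar H| = |G:H|$, the computation
\[
(Nx)^{|G/N:\bar H|} \; = \; N x^{|G:H|} \; \in \; H/N \; = \; \bar H
\]
is forced by the condition $x \in S(G)$ applied to the subgroup $H$. Letting $\bar H$ vary, this shows that the image of $S(G)$ in $G/N$ is contained in $S(G/N)$, for every normal subgroup $N$ of $G$.

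To conclude, I would argue by contradiction. If $1 \neq x \in S(G)$, then residual $S$-freeness provides $N \lhd G$ with $x \notin N$ and $S(G/N) = 1$. The previous paragraph gives $Nx \in S(G/N) = \{N\}$, i.e.\ $x \in N$, contradicting the choice of $N$. Hence $S(G) = 1$.

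I do not anticipate any serious obstacle: the whole argument boils down to the quotient-compatibility of $S$, a property of the same nature as the one already exploited in Lemma \ref{lemReductionRF} for the finite residual $R(G)$.
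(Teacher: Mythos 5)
Your proposal is correct and follows essentially the same route as the paper: the paper invokes Lemma \ref{lemReductionRF} (whose relevant direction is exactly your computation $(Nx)^{|G:H|}=Nx^{|G:H|}$ for $N\leqslant H\leqslant_f G$) to conclude that the image of $S(G)$ in $G/N$ lies in $S(G/N)$, and then applies residual $S$-freeness just as you do, phrased contrapositively rather than by contradiction.
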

\begin{proof}
Let $1 \neq x \in G$. By definition, there exists $N \lhd G$ such that $x \notin N$ and $S(G/N)=1$. In particular $Nx \notin S(G/N)$,
and so from Lemma \ref{lemReductionRF} we obtain $x \notin S(G)$. Since $x$ is arbitrary, it follows that $S(G)=1$.
\end{proof} 
	
	\begin{corollary}
	If $F$ is a finitely generated free group, then $S(F)=1$.
	\end{corollary}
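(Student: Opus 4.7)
The plan is to invoke the preceding lemma: it suffices to show that $F$ is residually $S$-free, i.e., that for each non-identity $x \in F$ there is a normal subgroup $N \lhd F$ with $x \notin N$ such that $F/N$ is $S$-free. Since $F$ is finitely generated (and residually finite), it is enough to produce a \emph{finite} quotient with these properties, and by Proposition \ref{propFitting} a finite group is $S$-free exactly when its Fitting subgroup is trivial. The natural targets are non-abelian finite simple groups $T$, since they trivially satisfy $F(T)=1$.

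To produce such quotients I would use Sanov's embedding of $F_2$ into $\mathrm{SL}_2(\mathbb{Z})$,
\[
a \mapsto \begin{pmatrix} 1 & 2 \\ 0 & 1 \end{pmatrix}, \qquad b \mapsto \begin{pmatrix} 1 & 0 \\ 2 & 1 \end{pmatrix},
\]
together with the Nielsen-Schreier fact that any $F$ of rank $n \geq 2$ embeds as an index-$(n-1)$ subgroup of $F_2$, and hence into $\mathrm{SL}_2(\mathbb{Z})$. Given $1 \neq x \in F$, represented by a matrix $X \neq \pm I$, I would choose an odd prime $p$ large enough that $X \not\equiv \pm I \pmod p$ and that $\mathrm{PSL}_2(p)$ has no proper subgroup of index at most $n-1$; all but finitely many primes satisfy both conditions. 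For such $p$ the Sanov generators reduce to elementary matrices generating $\mathrm{SL}_2(\mathbb{F}_p)$, so $F_2$ surjects onto $\mathrm{PSL}_2(p)$; the image of $F$ then has index at most $n-1$ in $\mathrm{PSL}_2(p)$, forcing equality with the whole group. Thus $F$ surjects onto the non-abelian simple group $\mathrm{PSL}_2(p)$ with $x$ sent to a non-identity element, so $N = \ker$ witnesses residual $S$-freedom at $x$.

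The main obstacle is verifying the two classical inputs about $\mathrm{PSL}_2(p)$: the surjectivity of the reduced Sanov generators (elementary, since for odd $p$ the entry $2$ is invertible and elementary matrices generate $\mathrm{SL}_2$ over a field) and the lower bound on the index of proper subgroups, via Dickson's classification of subgroups of $\mathrm{PSL}_2(p)$, which yields minimal faithful permutation degree $p+1$ for $p \geq 13$. Finally, I note that the statement implicitly requires $F$ to be non-abelian: for $F \cong \mathbb{Z}$, every finite quotient is cyclic hence nilpotent, so $S(F)=F$ by Proposition \ref{propEquiv}.
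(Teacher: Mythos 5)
Your proof is correct and follows the route the paper intends: the corollary is stated as an immediate consequence of the preceding lemma, the only missing input being that a finitely generated non-abelian free group is residually $S$-free, which for finite quotients (via Proposition \ref{propFitting}) amounts to the classical fact that such groups are residually (non-abelian finite simple). Your Sanov/$\mathrm{PSL}_2(p)$ argument is a complete verification of that fact: the index bound from Nielsen--Schreier, the exclusion of the finitely many primes with $X \equiv \pm I \pmod p$ (note $X \neq -I$ because the Sanov group is torsion-free), the surjectivity of the reduced elementary matrices for odd $p$, and Dickson's lower bound $p+1$ on the index of proper subgroups all check out, so the image of $F$ is the whole simple group and $x$ survives. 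Your closing caveat is also a genuine point about the statement as written: for $F \cong \mathbb{Z}$ every finite quotient is cyclic, hence nilpotent, so $S(F)=F$ by Proposition \ref{propEquiv}; the corollary should be read as concerning free groups of rank at least $2$.
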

	\vspace{0.1cm}
	
	\end{subsection}

	\begin{subsection}{Baer groups and $S$-groups}
	
	Following a different approach, now we study $S(G)$ starting from the subgroups of $G$.
	This will provide a counterexample to the converse of Proposition \ref{propSubn}.

		Let $B(G):=\{ x \in G : \langle x \rangle \lhd \lhd \> G \}$ be the {\itshape Baer radical} of $G$.
	It is clear that $B(G)$ is a characteristic subgroup.
	Moreover, $B(G)$ coincides with $F(G)$ if $G$ is finite, but it can be much larger in general (see \cite[Example 85]{2008Cas}).
	A group which equals its Baer radical is called a {\itshape Baer group}.
	The same argument in the proof of Proposition \ref{propFitting} shows that $B(G) \subseteq S(G)$.
	We say that a group is an {\itshape $S$-group} if it satisfies the equivalent conditions of Proposition \ref{propEquiv}.
	It is easy to see that the class of $S$-groups is closed by subgroups of finite index and quotients.
	Of course, every Baer group is an $S$-group.

	\begin{proposition}[Theorem 73 in \cite{2008Cas}] \label{propBaer}
	A group is a Baer group if and only if every its finitely generated subgroup is subnormal and nilpotent.
	In particular, every finitely generated Baer group is nilpotent.
	\end{proposition}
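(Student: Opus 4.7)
The plan is to address the two directions separately. The ``if'' direction is immediate: cyclic subgroups are finitely generated, so if every finitely generated subgroup of $G$ is subnormal (and nilpotent), then in particular every $\langle x \rangle$ is subnormal in $G$, hence $G = B(G)$ is a Baer group.

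For the ``only if'' direction, let $G$ be a Baer group and $H = \langle x_1, \ldots, x_n \rangle$ a finitely generated subgroup. I would proceed by induction on the number of generators $n$. The base case $n=1$ is trivial: $\langle x_1 \rangle$ is subnormal by the Baer hypothesis, and is cyclic and hence nilpotent. For the inductive step, set $K = \langle x_1, \ldots, x_{n-1} \rangle$, which by induction is subnormal and nilpotent, and set $C = \langle x_n \rangle$, which is cyclic and subnormal. Then $H = \langle K, C \rangle$, so the task reduces to showing that the join of two finitely generated subnormal nilpotent subgroups is again subnormal and nilpotent.

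This join statement is the substantive content; I would invoke (or prove) the classical Wielandt--Baer join theorem: if $A$ and $B$ are subnormal subgroups of a group and both are nilpotent and finitely generated, then $\langle A, B \rangle$ is subnormal and nilpotent. The standard argument runs by induction on the sum of the subnormal defects, climbing the subnormal chain of one factor while using the nilpotency of the other to close the commutator calculus: one shows, using Mal'cev-style identities, that $[A, {}_m B] = 1$ for some $m$ depending on the defects and nilpotency classes, which simultaneously bounds the defect of $\langle A, B \rangle$ and its nilpotency class.

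The main obstacle is precisely this join theorem. Arbitrary joins of subnormal subgroups need not be subnormal, and it is the combination of nilpotency with finite generation that makes the commutator calculus terminate. Once the equivalence is established, the final assertion is a direct corollary: a finitely generated Baer group $G$ is, tautologically, a finitely generated subgroup of itself, hence nilpotent.
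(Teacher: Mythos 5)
The paper itself gives no proof of this proposition: it is quoted as Theorem 73 of Casolo's notes \cite{2008Cas}, so the only fair comparison is with the proof in that source, and your outline is indeed the standard route taken there. The ``if'' direction and the final assertion are fine, and the induction on the number of generators correctly reduces everything to the join statement; note that to keep the induction running you also need (and implicitly use) that the join $\langle K,C\rangle$ is again finitely generated nilpotent, hence satisfies the maximal condition, so the join theorem can be applied at the next step. The theorem you invoke --- the join of two subnormal subgroups which are nilpotent and satisfy max (e.g.\ finitely generated nilpotent) is subnormal and nilpotent --- is a genuine classical result of Baer/Wielandt, so as a reduction your argument is sound.

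The caveat concerns your parenthetical ``or prove'': that join theorem is the entire content of the proposition, and the mechanism you sketch for it is not right. You suggest showing $[A,{}_m\,B]=1$ ``for some $m$ depending on the defects and nilpotency classes''; no such bound depending only on defects and classes can exist, because then a routine direct-limit argument (finitely generated subgroups of a nilpotent subnormal subgroup are again subnormal of bounded defect and bounded class) would remove the finite-generation hypothesis altogether, whereas it is classical that the join of two nilpotent subnormal subgroups without max need not even be subnormal --- this is exactly the well-known join pathology. So finite generation is not merely what ``makes the commutator calculus terminate''; it is essential to the truth of the statement, and the known proofs (Baer's, as in Casolo's notes or Lennox--Stonehewer) exploit the maximal condition through normal closures such as $A^{\langle A,B\rangle}$ rather than through a direct centralizing computation; moreover $[A,{}_m\,B]=1$ is a consequence of the nilpotency of the join, not by itself a criterion for it. If you intend, as the paper does, to treat the hard result as a citation, your write-up is acceptable; if you intend a self-contained proof, the join theorem still has to be proved and your sketch does not yet do so.
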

	 \vspace{0.1cm}
	
	By Propositions \ref{propEquiv} and \ref{propBaer}, every finitely generated non-nilpotent $p$-group is an $S$-group which is not Baer.
	The next theorem of Wilson \cite{1971Wilson} provides many groups with {\itshape trivial} Baer radical.
	We recall that an infinite group is just-infinite if every its proper quotient is finite.
	
	\begin{theorem}[Theorem 2 in \cite{1971Wilson}] \label{thWilson}
	Let $G$ be a just-infinite group.
	If $B(G) \neq 1$, then $B(G)$ is a free abelian group of finite rank, which coincides with its own centralizer in $G$.
	\end{theorem}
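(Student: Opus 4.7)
The plan is to exploit the just-infinite hypothesis in the form that every non-trivial characteristic subgroup of $G$ has finite index. Since $B(G)$ is a non-trivial characteristic subgroup of $G$, it has finite index, and by Proposition \ref{propBaer} it is locally nilpotent. The goal is to produce a non-trivial abelian characteristic subgroup $A$ of $G$, show it is free abelian of finite rank, and finally deduce that $A=B(G)=C_G(B(G))$.

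The natural candidate is $A:=Z(B(G))$, which is characteristic in $B(G)$ and hence in $G$. Showing $A\neq 1$ is the main obstacle, since general Baer groups can have trivial center (e.g.\ McLain groups), so one must genuinely exploit the finite-index of $B(G)$ in the just-infinite $G$, not only that $B(G)$ is Baer. My attempt would be to pick any $1\neq a\in B(G)$, consider the normal closure $N:=\langle a\rangle^G\leqslant B(G)$, which is a non-trivial normal subgroup of $G$ and hence of finite index, combine $|G:B(G)|<\infty$ with the local nilpotence of $B(G)$ to argue $N$ is finitely generated and therefore nilpotent by Proposition \ref{propBaer}, and then use $Z(N)\neq 1$ together with an intersection argument over such $N$ to place a non-trivial element inside $Z(B(G))$.

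Once $A:=Z(B(G))\neq 1$, note that $B(G)\leqslant C_G(A)$, so $G$ acts on $A$ through a finite quotient. Hence every $G$-orbit on $A$ is finite, $\langle a^G\rangle$ is finitely generated for each $a\in A$, and just-infiniteness forces $\langle a^G\rangle$ to have finite index in $G$; it follows that $A$ itself is finitely generated abelian. Its torsion subgroup is finite and characteristic in $G$, so by just-infiniteness it must be trivial (else $G$ would be finite). Therefore $A\cong\mathbb{Z}^r$ for some $r\geq 1$.

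To conclude, set $C:=C_G(A)\supseteq B(G)$. Then $C$ is characteristic in $G$ and $A\leqslant Z(C)$, so $Z(C)$ is a non-trivial characteristic subgroup, hence of finite index; in particular $C/Z(C)$ is finite. Schur's theorem then makes $[C,C]$ finite; being characteristic, it must be trivial (else $G$ is finite). Thus $C$ is abelian, so every cyclic subgroup of $C$ is subnormal in $G$ via $\langle x\rangle\lhd C\lhd G$, giving $C\subseteq B(G)$ and hence $C=B(G)$. Therefore $B(G)$ is abelian, $B(G)=Z(B(G))=A\cong\mathbb{Z}^r$, and $C_G(B(G))=C_G(A)=B(G)$, as required.
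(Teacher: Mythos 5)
First, a point of comparison: the paper does not prove this statement at all — it is quoted verbatim as Theorem 2 of Wilson's 1971 paper — so there is no internal proof to measure your argument against; I can only assess it on its own terms.

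The second half of your argument, from the point where $A:=Z(B(G))\neq 1$ is available, is correct and clean: finite orbits give that $A$ is finitely generated, torsion-freeness follows from just-infiniteness, and the Schur-theorem step shows $C_G(A)$ is abelian, hence contained in $B(G)$, which closes the loop. The genuine gap is exactly where you yourself locate the ``main obstacle'': the proof that $Z(B(G))\neq 1$, and two steps there are unjustified. (i) You claim $N=\langle a\rangle^{G}$ is finitely generated by ``combining $|G:B(G)|<\infty$ with local nilpotence''. Finite index plus local nilpotence does not yield finite generation; what you would really need is that $G$ itself is finitely generated, which is not a hypothesis (just-infinite groups need not be finitely generated — infinite simple locally finite groups are examples) and is in effect only available a posteriori, once the theorem is proved and $G$ is known to be virtually $\mathbb{Z}^{r}$. (ii) Even granting that $N$ is finitely generated and nilpotent, the ``intersection argument over such $N$'' that is supposed to place a non-trivial element in $Z(B(G))$ is not described: $Z(N)$ centralizes $N$, but nothing you say forces any non-trivial element to centralize all of $B(G)$; as you note, Baer groups (e.g.\ McLain's group) can have trivial centre, so a genuinely new use of just-infiniteness is needed precisely here. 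Note that $Z(B(G))\neq 1$ is essentially the whole content of the theorem: once it holds, your closing paragraph already shows $B(G)$ is abelian. A partial repair — running your orbit/Schur argument with $A_0:=Z(N)$ instead, using $N\leqslant C_G(A_0)$ to get a finite-index centralizer — still depends on (i), and moreover only gives that $C_G(A_0)$ is abelian and contained in $B(G)$, not that it equals $B(G)$, since $B(G)\leqslant C_G(A_0)$ is no longer clear. So the heart of Wilson's theorem is still missing from the proposal.
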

	
	\begin{lemma}
	Let $G$ be a just-infinite $p$-group. Then $S(G)=G$, but $B(G)=1$.
	\end{lemma}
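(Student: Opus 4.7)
The plan is to prove the two equalities separately, using the characterizations already established.

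For $S(G) = G$, I would invoke the equivalence (A) $\Leftrightarrow$ (C) in Proposition \ref{propEquiv}, which reduces the claim to showing that every finite quotient of $G$ is nilpotent. Since $G$ is a $p$-group, every element of any quotient $G/N$ has $p$-power order; if in addition $G/N$ is finite, then $G/N$ is a finite $p$-group, hence nilpotent. Thus condition (C) holds and $S(G) = G$.

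For $B(G) = 1$, I would proceed by contradiction using Wilson's theorem (Theorem \ref{thWilson}). Assume $B(G) \neq 1$. Since $G$ is just-infinite, Theorem \ref{thWilson} asserts that $B(G)$ is a free abelian group of finite rank. A nontrivial free abelian group contains elements of infinite order, but every element of $G$ — and hence of $B(G) \leqslant G$ — has order a power of $p$. This contradiction forces $B(G) = 1$.

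I do not expect any obstacle: both parts are short applications of results already collected, and the only subtlety to flag is that ``$p$-group'' here must be taken in the torsion sense (every element has $p$-power order), which is the standard convention and exactly what is needed to make both halves of the argument go through.
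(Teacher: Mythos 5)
Your proposal is correct and follows essentially the same route as the paper: $S(G)=G$ via Proposition \ref{propEquiv} and the nilpotency of finite $p$-groups, and $B(G)=1$ via Wilson's Theorem \ref{thWilson}, since a nontrivial free abelian group cannot consist of $p$-elements. The only difference is that you spell out the details (finite quotients are finite $p$-groups; free abelian groups have elements of infinite order), which the paper leaves implicit.
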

	\begin{proof}
	The fact that $G=S(G)$ follows from Proposition \ref{propEquiv} and the fact that finite $p$-groups are nilpotent.
	If $B(G) \neq 1$, then $B(G)$ is a free abelian group by Theorem \ref{thWilson}, which contraddicts that $G$ is a $p$-group.
	\end{proof}
	
	\begin{example}[No converse to Proposition \ref{propSubn}] \label{exBad}
	Let $G$ be a just-infinite $p$-group, and let $K \leqslant G$ be any nilpotent subgroup.
	Since every subgroup of finite index of $G$ is subnormal,
	from Theorem \ref{thSubn2} we have that $|HK:H|$ divides $|G:H|$ for every $H \leqslant_f G$.
	On the other hand, $K$ is not subnormal in $G$, because $B(G)=1$.
	\end{example}
	\vspace{0.1cm}
	
	Finally, it is worth to mention the following theorem of Robinson \cite{1970Rob}.
	Given a group property $\mathcal{P}$,
	a group is hyper-$\mathcal{P}$ if every its non-trivial homomorphic image has some non-trivial normal subgroup with the property $\mathcal{P}$.
	
	\begin{theorem}[Theorem 1 in \cite{1970Rob}] \label{thRobinson}
	Let $G$ be a finitely generated hyperabelian or hyperfinite group. If $G$ is an $S$-group, then $G$ is nilpotent.
	\end{theorem}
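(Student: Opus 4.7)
The natural starting point is Proposition \ref{propEquiv}, which converts the $S$-group hypothesis into the more tractable equivalent conditions (C), every finite quotient of $G$ is nilpotent, and (D), every finite-index subgroup of $G$ is subnormal. The goal thus becomes to show that a finitely generated hyperabelian or hyperfinite group satisfying (C) and (D) is nilpotent.

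I would dispatch the hyperfinite case first by showing, independently of the $S$-group assumption, that every finitely generated hyperfinite group is finite. Given an ascending normal series $\{G_\alpha\}_{\alpha \leq \lambda}$ with finite factors, finite generation of $G$ forces $\lambda$ to be a successor ordinal $\mu+1$, because otherwise a finite generating set would lie in some $G_\beta$ with $\beta < \lambda$ and force $G_\beta = G$. Then $G_\mu$ has finite index in $G$, hence is finitely generated by Schreier's lemma, and its restricted hyperfinite series has shorter length. A transfinite induction on $\lambda$ collapses the series to finite length, so $G$ is finite, and condition (C) applied to $G$ itself gives nilpotency.

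For the hyperabelian case, the analogous top-level analysis shows the hyperabelian series terminates at a successor $\mu+1$, but now $G/A_\mu$ is only finitely generated abelian, not finite, so one cannot peel off from the top. Instead I would peel off from the bottom: let $A$ be the first nontrivial term of the series; it is abelian and normal, and $G/A$ inherits being finitely generated, hyperabelian, and an $S$-group (using closure of the class under quotients). By transfinite induction on the ordinal length of the series, $G/A$ is nilpotent, so $G$ is abelian-by-nilpotent and in particular a finitely generated solvable group. Combining solubility with condition (D), a classical result on finitely generated groups in which every subgroup is subnormal (in the spirit of Smith, Roseblade, and M\"ohres) then forces $G$ to be nilpotent.

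The principal obstacle is the transfinite induction in the hyperabelian case: at a limit ordinal of the hyperabelian series, the \emph{first} nontrivial term $A$ is really a union of smaller terms, and one must verify that $G/A$ has a hyperabelian series of strictly smaller ordinal length so that the induction actually terminates. Reconciling this with the fact that $A$ itself need not be finitely generated, and ensuring that the $S$-group property propagates to $G/A$ with enough structure to close the induction, is the technical heart of the argument; once the reduction to \emph{finitely generated solvable plus every finite-index subgroup subnormal} is achieved, the existing literature on such groups completes the proof.
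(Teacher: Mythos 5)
Note first that the paper does not prove this statement at all: it is quoted verbatim from Robinson \cite{1970Rob}, so there is no internal argument to compare with, and a complete blind proof would have to reproduce a genuinely hard theorem. Your hyperfinite half is fine: a finitely generated hyperfinite group is finite (your successor-ordinal/Schreier induction works, or one can simply observe that an ascending normal series with finite factors forces local finiteness), and then condition (C) applied to the quotient $G/1$ gives nilpotency. The gaps are in the hyperabelian half, and they are serious.

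First, the transfinite induction does not close. Quotienting by the first nontrivial term $A=G_1$ of an ascending series of length $\lambda$ leaves the series $(G_\alpha/A)_{1\le\alpha\le\lambda}$, and for infinite $\lambda$ this has the \emph{same} ordinal length (since $1+\lambda=\lambda$), so your induction parameter never decreases; you flag this as the ``technical heart'' but offer no mechanism to repair it, and Robinson's actual argument is not an induction on the length of the hyperabelian series. Worse, even the shortest nontrivial case of your induction (length $2$, i.e.\ $G$ finitely generated metabelian) is already the hard core of the theorem, so the induction does not isolate an easy base case. Second, and fatally, the final step cites the wrong literature: the theorems of Roseblade, Smith and M\"ohres concern groups in which \emph{every} subgroup is subnormal, whereas condition (D) only gives subnormality of subgroups of \emph{finite index}, equivalently (C), that all finite quotients are nilpotent. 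The statement you actually need --- a finitely generated solvable (even abelian-by-nilpotent) group all of whose finite quotients are nilpotent is nilpotent --- is not a formal consequence of those results; it is essentially Robinson's theorem itself, and its proof rests on P.~Hall's theory of finitely generated abelian-by-nilpotent groups (max-$n$, residual finiteness) together with a nontrivial module-theoretic analysis. Residual nilpotence alone cannot finish the job: the free metabelian group of rank $2$ is finitely generated, residually (finite nilpotent), and not nilpotent (it surjects onto $\Sym(3)$, so it does not violate the theorem), which shows that after applying Hall's residual finiteness one still needs a real argument exploiting the full strength of (C). As it stands, your proposal reduces the theorem, incompletely, to a special case that is the theorem.
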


	\end{subsection}
	
	\end{section}

	 \vspace{0.2cm}
    \section*{Acknowledgments} 
  The author thanks Bob Guralnick and Orazio Puglisi for useful conversations.

		\vspace{0.2cm}
\thebibliography{10}

 \bibitem{2008Cas} C. Casolo, \textit{Groups with all subgroups subnormal},
	 Note di Matematica \textbf{28 (2)} (2008), 1-153.
	 
	  \bibitem{1993GKL} R. Guralnick, P.B. Kleidman, R. Lyons, \textit{Sylow $p$-subgroups and subnormal subgroups of finite groups},
	 Proceedings of the London Mathematical Society \textbf{66 (1)} (1993), 129-151.
	 
	  \bibitem{1962Kegel} O.H. Kegel, \textit{Sylow-gruppen und subnormaheiler endlicher gruppen},
	 Mathematische Zeitschrift \textbf{78} (1962), 205-221.
	 
	 \bibitem{1991Kleidman} P.B. Kleidman, \textit{A proof of the Kegel–Wielandt conjecture on subnormal subgroups},
	 Annals of Mathematics \textbf{133 (2)} (1991), 369-428.
	
	 \bibitem{2022Levy} D. Levy, \textit{The size of a product of two subgroups and subnormality},
	 Archiv der Mathematik \textbf{118 (4)} (2022), 361-364.
	
	\bibitem{1970Rob} D.J.S. Robinson, \textit{A theorem on finitely generated hyperabelian groups},
	Inventiones Mathematicae \textbf{10} (1970), 38-43.

  \bibitem{1980W} H. Wielandt, \textit{Zusammengesetzte Gruppen: H\"olders Programm heute},
	 Proceedings of Symposia in Pure Mathematics \textbf{37} (1980), 161-173.

\bibitem{1971Wilson} J.S. Wilson, \textit{Groups with every proper quotient finite},
	Mathematical Proceedings of the Cambridge Philosophical Society \textbf{69} (1971), 373-391.

		\vspace{0.5cm}

\end{document}